\pgfplotsset{
	table/search path={plot_figures},
}
\newtheorem{theorem}{Theorem}
\newtheorem{lemma}[theorem]{Lemma}
\newtheorem{corollary}[theorem]{Corollary}
\newtheorem{assumption}{Assumption}
\newtheorem{definition}{Definition}
\DeclareMathOperator*{\argmin}{arg\,min}
\pgfplotsset{
	table/search path={plot_figures},
}
\title{\LARGE \bf
On Increasing Self-Confidence in Non-Bayesian Social Learning \\ over Time-Varying Directed Graphs
}
\author{C\'esar A. Uribe and Ali Jadbabaie 
	\thanks{This research was supported in part by DARPA Lagrange and a Vannevar Bush Fellowship.}
\thanks{The authors are with the Laboratory for Information and Decision Systems (LIDS), and the Institute for Data, Systems, and Society (IDSS),
        Massachusetts Institute of Technology, 77 Massachusetts Ave, Cambridge, MA 02139
        {\tt\small \{cauribe,jadbabai\}@mit.edu}}%
}
\begin{document}

\maketitle
\thispagestyle{empty}
\pagestyle{empty}

\begin{abstract}

We study the convergence of the log-linear non-Bayesian social learning update rule, for a group of agents that collectively seek to identify a parameter that best describes a joint sequence of observations. Contrary to recent literature, we focus on the case where agents assign decaying weights to its neighbors, and the network is not connected at every time instant but over some finite time intervals. We provide a necessary and sufficient condition for the rate at which agents decrease the weights and still guarantees social learning. 

\end{abstract}

\section{INTRODUCTION}

The theory of non-Bayesian social learning~\cite{molavi2018theory} has gained increasing attention in recent years for its ability to provide simple and practical models for inference in complex environments where a large number of decision makers repeatedly interact over some network structure. In contrast to a fully rational approach, where agents incorporate new information in a Bayesian manner, the non-Bayesian social learning model assumes agents use some other functional form to aggregate its information and construct new beliefs~\cite{jad12}. Some examples of these aggregation schemes build upon classical results on linear~\cite{deg74} and log-linear models~\cite{gil93}.

The basic non-Bayesian social learning model assumes a group of agents tries to identify the state of the world via sequentially receiving information about an unknown state, and communicating with other agents in their social clique. Moreover, agents incorporate the received information using some social learning rule~\cite{molavi2018theory,jad12}. A group is said to achieve social learning if all agents can identify the state of the world even if their local private signals do not provide sufficient information. For a summary of some of the recent results in non-Bayesian social learning see~\cite{ned16c}.

One of the enabling tools for the study of non-Bayesian social learning is the analysis of distributed averaging algorithms~\cite{ned09,tsi84,touri2014product}. Therefore, most of the existing results about the convergence of beliefs in social learning inherit some requirements about the connectivity of the network and the persistence of weights an agent assigns to its neighbors. In terms of graph connectivity, a group of agents following log-linear update rules has been shown to be able to learn the unknown state parameter for fixed undirected graphs~\cite{jad12,ned17e}, fixed directed graphs, time-varying undirected graphs~\cite{ned14}, time-varying directed graphs~\cite{ned15b}. In terms of graph connectivity, uniform connectivity has been shown sufficient for social learning~\cite{ned15b}. However, the central assumption in most of the previous results is that the weight an agent assigns to a neighbor at any time instant is lower bounded by some positive constant. This implies that, although the graph might change with time, the links are persistent and its effects do not decay with time.

The assumption of the existence of lower bounds on the weights for time-varying graphs poses some structural constraints that ease the convergence analysis of the aggregating schemes. However, recent approaches pose the question of increasing self-confidence, where an agent gradually increases its self-weights and at the same time assigns a decaying weight to its neighbors~\cite{wang2016opinion}. Such behavior is justified by the intuition that an agent might become more and more confident in its own opinion as aggregates more information~\cite{wang2016opinion}. Also, one might consider agents that assume only they are becoming more informed over time, while others are not~\cite{demarzo2003persuasion}. An increasing self-confidence implies that an agent will assign a zero weight to the opinion of its neighbors eventually. 

In~\cite{demarzo2003persuasion}, the authors provided a condition for the rate at which the weights decay such that social learning is guaranteed for fixed graphs. In~\cite{molavi2018theory} this result was extended to time-varying graphs that are connected at each time instant. In~\cite{wang2016opinion}, the authors considered a fixed rate of decay of $O(1/k)$ and showed learning is achieved for both fixed and time-varying graphs that are always connected. Other authors have considered the phenomena of asymptotic isolation of agents in a network assuming the intervals of intercommunication between them increase with time~\cite{lorenz2011convergence,olshevsky2018fully}. 

In this paper, we generalize existing results and provide the conditions for which social learning happens, even if agents have an increasing self-confidence or decaying weights for a uniform strongly connected sequences of graphs, that is, sequences of graphs that might not be connected at each time instant but over finite periods of time. Also, we show that this result is tight by providing a necessary condition for social learning, on the rate at which the weights decay. If such condition does not hold, one cannot guarantee learning occurs for general sequences of uniformly connected graphs. This result is of independent interests for the literature of average consensus and distributed optimization, as it provides a condition to guarantee that consensus is achieved even if the weight matrices do not have lower bounded non-zero entries.

This paper is organized as follows. Section~\ref{sec:problem} presents the problem of non-Bayesian social learning. Also, we recall some basic definitions and assumptions, and state our main result. Section~\ref{sec:consistency} shows the proof our main results, and introduce some auxiliary technical lemmas. Section~\ref{sec:conflict} extends the main result to the case of agents with conflicting hypotheses. Section~\ref{sec:converse} presents a converse result, where we show a necessary condition to guarantee social learning. Section~\ref{sec:numerical} presents a numerical example that validates our theoretical results. Finally, Section~\ref{sec:conclusions} presents some conclusions and future work.

\textbf{\textit{Notation}}: Random variables are represented as upper case letters, i.e. $X$, whereas their realizations as its corresponding lower case, i.e. $x$. Subscripts will denote time indices and make use of the letter $k$. Agent indices are represented as superscripts and use the letters $i$ or $j$. The $i$-th row and $j$-th column entry of a matrix $A$ is denoted as $[A]_{ij}$. Moreover, for a sequence of matrices $\{A_{k}\}$ we denote $A_{k:t} = A_kA_{k-1}\cdots A_{t+1}A_t$ for $k \geq t$. We use $a.s.$ to refer to \textit{almost sure} convergence.  

\section{PROBLEM FORMULATION AND RESULTS}\label{sec:problem}

Consider a set of $n$ agents, denoted by $V = \{1,\dots,n\}$, who seek to learn a fixed underlying and unknown state of the world by sequentially observing realizations from a set of random variables. Particularly, at each time step $k \geq 0$, each agent $i$ receives an independent (in time and among the agents) private realization of a \textit{finite} random variable $X_k^i \sim P^i$, where we assume $P^i$ has full support over the realizations of $X^i_k$. Moreover, each agent has a private parametrized family of distributions $\mathcal{P}^i_{\Theta} = \{P^i_{\theta}\}$ for $\theta \in \Theta$, where $\Theta = \{\theta_1,\dots,\theta_p\}$ is a finite set. We will generally refer to $\Theta$ as the set of hypotheses. The group objective is to find a parameter $\theta \in \Theta$ that solves the following optimization problem
\begin{align}\label{eq:main_problem}
\min_{\theta \in \Theta} \sum_{i=1}^{n} D_{KL}(P^i \| P^i_\theta),
\end{align}
where $D_{KL}(P^i \| P^i_\theta)$ is the Kullback-Leibler (KL) divergence between the distribution $P^i$ and $P^i_\theta$. In words, the group of agents tries to identify a member of their joint parameter space such that it generates a probability distribution $\prod_{i=1}^{i}P^i_\theta$ that minimizes the KL divergence with the true distribution of the observations $\prod_{i=1}^{n}P^i$. The fact that $\Theta$ is finite guarantees that a solution of $\eqref{eq:main_problem}$ always exists. We will denote as $\Theta^*$ the subset of $\Theta$ that minimizes~\eqref{eq:main_problem}. Moreover, in order to avoid trivial solutions we assume $\Theta^* \neq \Theta$.

Clearly, if for each individual agent $\Theta^*_i = \argmin_{\theta\in\Theta} D_{KL}(P^i \| P^i_\theta)$ is non-empty and has only one element that is common to all agents, each agent can solve \eqref{eq:main_problem} separately. However, we study the general case where there are identifiability limitations, e.g., the set $\Theta^*_i$ has more than one element for some $i \in V$, yet $\bigcap_{i=1}^n \Theta^*_i$ is non-empty, and $\Theta^* \in \bigcap_{i=1}^n \Theta^*_i$. Moreover, we also study the case where $\bigcap_{i=1}^n \Theta^*_i $ is empty, that is, there might be conflicting hypothesis~\cite{ned15} in the sense that the minimizer of the local functions might not be the same for all agents. 

Under these identifiability issues, the agents collaborate with each other to jointly solve problem~\eqref{eq:main_problem}. This collaboration comes in the form of exchange of information among them. We will assume that in addition to their private signals, each agent receives at each time step the beliefs of a subset of the other agents that we will call the neighbors. We define the beliefs of an agent $i$ as a probability distribution over the simplex generated by $\Theta$. We denote by $\mu^i_k(\theta)$ the belief that agent $i$ has at time $k$ about the hypothesis $\theta$. The communication among the agents is modeled as a sequence of graphs $\{\mathcal{G}_k\}$, where $\mathcal{G}_k  = (V,\mathcal{E}_k)$, and $\mathcal{E}_k$ is a set of edges such that $(j,i) \in \mathcal{E}_k$ if agent $j$ can send information to agent $i$ at time $k$.

In this paper, we study the group dynamics where each agent $i \in V$ updates its beliefs following the \textit{log-linear social learning rule}:
\begin{align}\label{eq:update}
\mu^i_{k+1} (\theta) &  = \frac{\prod_{j=1}^{n} \mu^j_k(\theta)^{[T_k]_{ij}} p^i_\theta(x^i_{k+1})}{\sum_{\hat \theta \in \Theta}\prod_{j=1}^{n} \mu^j_k(\hat \theta)^{[T_k]_{ij}} p^i_{\hat\theta}(x^i_{k+1})}, 
\end{align}
where $p^i_\theta(x^i_{k}) = P(X^i_{k} = x^i_k | \theta^* = \theta)$ denotes the probability of observing the realization $x^i_k$ at time $k$ conditioned on the true hypothesis being $\theta$. Moreover $T_k$ is a non-negative matrix of weights compliant with the underlying structure of the graph $\mathcal{G}_k$.

In contrast with other works, our objective is to establish necessary conditions for the convergence of the beliefs of all the agents in the network to the solution of~\eqref{eq:main_problem} for a weaker connectivity assumption. Mainly, we are interested in whether social learning is achieved with the log-linear update rule~\eqref{eq:update} for agents with increasing self-confidence, i.e., the weights an agent assigns to its neighbors decay to zero, or equivalently, the self-weight $[T_k]_{ii}$ converges to $1$, for all $i \in V$, as $k$ increases. In general, one should expect that the rate at which an agent increases its self-confidence should not be too fast, as not enough information from its neighbors might arrive. 

Next, we recall some basic assumptions and definitions. 

\begin{definition}
	A sequence of graphs $\{\mathcal{G}_k\}$ is uniformly strongly connected or $B$-strongly connected, if there exists an integer $B>0$ such that the graph with edge set
	\begin{align*}
	\mathcal{E}_k^B & = \bigcup_{i=kB}^{(k+1)B-1} \mathcal{E}_i
	\end{align*}
	is strongly connected for every $k\geq 0$.
\end{definition}

\begin{definition}[Definition $2.1$ in~\cite{touri2012product}]
	Let $\{A_k\}$ be a sequence of row stochastic matrices. We say that $\{A_k\}$ is ergodic if $\lim_{k\to \infty} A_k A_{k-1} \dots A_s = \boldsymbol{1}\phi_s^T$ for any $s\geq 0$, where $\phi_s$ is a stochastic vector. 
\end{definition}

\begin{assumption}\label{assum:bounds_on_a}
	Given a sequence of graphs $\{\mathcal{G}_k\}$ that is $B$-strongly connected, then
	\begin{enumerate}[(a)]
		\item For each $k$, there exists a weight matrix $T_k$ that is row-stochastic and compliant with the underlying graph topology, i.e., $[T_{k}]_{ij} > 0$ if  $(j,i) \in  \mathcal{E}_k$.
		\item There exists a sequence $\{\lambda_k\}$, with $\lambda_k \in (0,1)$, and constants $\underline{a}\in (0,1)$ and $\bar{a}\in (0,1)$, such that
		\begin{align*}
		[T_k]_{ij} & \geq \lambda_k \underline{a},\\
		\sum\limits_{j \neq i} [T_k]_{ij} & \leq \lambda_k \bar{a},
		\end{align*}
		for all $k \geq 0$ and all pairs of agents such that $(j,i)\in\mathcal{E}_k$ and $i \neq j$.
	\end{enumerate}
\end{assumption}

Assumption~\ref{assum:bounds_on_a}(a) ensures that the sequence of weights used in the update rule~\eqref{eq:update} is consistent with the structure of the communication network among the agents. Particularly, if agent $j$ can send information to agent $i$ at a time instant $k$, then the edge $(j,i) \in \mathcal{E}_k$, and therefore the agent $i$ assigns a positive weight to the information coming from agent $j$, i.e.,  $[T_k]_{ij}>0$. Assumption~\ref{assum:bounds_on_a}(b) limits the rate of decay of the weights an agent assigns to its neighbors. Particularly, even if an edge does not exist at every time step, its corresponding strength do not decay faster than the sequence $\{\lambda_k\}$. This in turn limits the rate at which the self-confidence of an agent increases. Our main result will characterize the conditions on $\{\lambda_k\}$ to guarantee the network of agents will learn the state.

\begin{assumption}\label{assum:no_conflict}
	The set $\bigcap_{i=1}^n \Theta_i^*$, where 
	$\Theta_i^* = \argmin\limits_{\theta \in \Theta} D_{KL}\left(P^i\|P^i_\theta \right)$
	for each $i$, is non-empty. Moreover, $\Theta^* \subseteq \bigcap_{i=1}^n \Theta_i^*$.
\end{assumption}

Assumption \ref{assum:no_conflict} guarantees that even if some agents cannot correctly identify $\Theta^*$,  the optimal set that solves~\eqref{eq:main_problem} lies inside the optimal set of the solution of their local problem. Later in Section~\ref{sec:conflict}, we will remove this assumption by allowing conflicting hypotheses, i.e.,  $\bigcap_{i=1}^n \Theta_i^*$ being empty.

Next, we state our main result regarding the consistency of the learning rule~\eqref{eq:update}. That is, all the agents in the network concentrate their beliefs on the set $\Theta^*$ that solves~\eqref{eq:main_problem}.

\begin{theorem}\label{thm:main}
	Let Assumptions~\ref{assum:bounds_on_a} and \ref{assum:no_conflict} hold. If
	\begin{align}\label{eq:inf_often}
	\lim_{k \to \infty } k \prod_{i=kB}^{(k+1)B-1} \lambda_i & = \infty. 
	\end{align} 
	Then, the update rule~\eqref{eq:update} has the following property:
	\begin{align*}
	\lim_{k \to \infty } \mu^i_k(\theta)=0 \qquad a.s. \qquad \forall\theta \notin \Theta^*, i \in V.
	\end{align*}
\end{theorem}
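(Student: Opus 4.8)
The plan is to pass from the nonlinear belief update to a linear recursion for the log-beliefs, then exploit ergodicity of the weight products together with the summability condition~\eqref{eq:inf_often}. First I would fix $\theta \notin \Theta^*$ and a reference $\theta^\star \in \Theta^*$, and track the log-ratio $\varphi^i_k(\theta) = \log\bigl(\mu^i_k(\theta)/\mu^i_k(\theta^\star)\bigr)$. Taking logs in~\eqref{eq:update}, the normalization constant cancels in the ratio, so one gets the \emph{exactly linear} recursion
\begin{align*}
\varphi^i_{k+1}(\theta) = \sum_{j=1}^n [T_k]_{ij}\,\varphi^j_k(\theta) + L^i_{k+1}(\theta),
\end{align*}
where $L^i_{k+1}(\theta) = \log\bigl(p^i_\theta(X^i_{k+1})/p^i_{\theta^\star}(X^i_{k+1})\bigr)$ is the incremental log-likelihood-ratio. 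Writing $\varphi_k = (\varphi^1_k(\theta),\dots,\varphi^n_k(\theta))^T$, this is $\varphi_{k+1} = T_k \varphi_k + L_{k+1}$, hence $\varphi_k = T_{k-1:0}\varphi_0 + \sum_{t=1}^{k} T_{k-1:t} L_t$. The goal is to show $\varphi^i_k(\theta) \to -\infty$ a.s., which is equivalent to $\mu^i_k(\theta) \to 0$.

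Next I would invoke an ergodicity statement for $\{T_k\}$. The matrices $T_k$ are row-stochastic (Assumption~\ref{assum:bounds_on_a}(a)) and compliant with a $B$-strongly connected graph sequence; by Assumption~\ref{assum:bounds_on_a}(b) the nonzero off-diagonal entries over a block are of order $\prod_{i=kB}^{(k+1)B-1}\lambda_i$, while diagonal entries are bounded below. A product-of-stochastic-matrices result (of the Touri–Nedić type cited in the excerpt) should give that $\{T_k\}$ is ergodic, i.e. $T_{k:s} \to \boldsymbol{1}\phi_s^T$, and more quantitatively that the ``disagreement'' contracts: the deviation of $T_{k-1:t}$ from its rank-one limit is controlled by $\prod$ of the $\lambda$-blocks. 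Summing the likelihood increments, $\sum_{t=1}^k T_{k-1:t} L_t$ splits into a consensus part $\boldsymbol{1}\sum_t \phi_t^T L_t$ plus a disagreement part; using the Strong Law of Large Numbers on each coordinate of $L_t$ (the $L_t$ are i.i.d.-in-time with mean $\mathbb{E}[L^i_t] = D_{KL}(P^i\|P^i_{\theta^\star}) - D_{KL}(P^i\|P^i_\theta) \le 0$, and $\le 0$ strictly for at least one $i$ by Assumption~\ref{assum:no_conflict} and $\Theta^\star \ne \Theta$), the Cesàro-type average $\frac1k \sum_t \phi_t^T L_t$ stays bounded away from $0$ from above provided the weights $\phi_t$ do not collapse onto the ``wrong'' agents too fast — and this is precisely where condition~\eqref{eq:inf_often} enters: $k\prod_{i=kB}^{(k+1)B-1}\lambda_i \to \infty$ forces each $\phi^i_t$ to remain $\Omega(1/t)$-ish so that the bad-agent contribution accumulates at rate $\to \infty$. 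I would make this precise with the auxiliary lemmas the paper promises in Section~\ref{sec:consistency}, bounding $|\phi^j_t|$ below in terms of the $\lambda$-products.

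The main obstacle is the delicate bookkeeping in the second step: one must show that the negative drift coming from the agents $i$ with $\mathbb{E}[L^i_t] < 0$ is \emph{not} diluted to nothing by the vanishing mixing weights. Concretely, $\varphi^i_k \approx \sum_{t \le k} \sum_j [T_{k-1:t}]_{ij} \mathbb{E}[L^j_t]$, and each $[T_{k-1:t}]_{ij}$ may be as small as a product of $\lambda$-blocks; the sum telescopes to something like $\sum_{t\le k}\bigl(\text{product of }\lambda\text{ over }[t,k]\bigr)$, and I need this to diverge to $+\infty$ (so that $-\varphi^i_k \to \infty$). Showing that $k\prod_{i=kB}^{(k+1)B-1}\lambda_i\to\infty$ is exactly the right condition for this divergence — and simultaneously that the stochastic fluctuations (martingale part of $\sum T_{k-1:t}(L_t - \mathbb{E}L_t)$) are $o$ of the drift, via a maximal inequality or a Borel–Cantelli argument over blocks — is the technical heart. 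Everything else (cancellation of normalizers, linearity, the SLLN, non-emptiness of $\Theta^\star$) is routine once this estimate is in hand.
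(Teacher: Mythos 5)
Your skeleton coincides with the paper's: linearize the update by passing to $\varphi^i_k(\theta,\theta^*)=\log(\mu^i_k(\theta)/\mu^i_k(\theta^*))$, decompose $\sum_t T_{k-1:t}\mathcal{L}_t$ into a consensus part weighted by an absolute probability sequence $\{\phi_t\}$ plus a disagreement part, kill the fluctuations with the SLLN, and show the drift is strictly negative while the disagreement is $o(k)$. However, the step you defer as ``the technical heart'' is precisely the content of the paper's Lemma~\ref{lemma:inf_often} and of the second half of its proof, and your sketch of it points in the wrong direction. In the paper, condition~\eqref{eq:inf_often} is \emph{not} used to keep $\phi^i_t=\Omega(1/t)$: the entries of the absolute probability sequence are bounded below by the \emph{constant} $\delta/n\geq \eta^{nB}/n$ (Lemma~\ref{lemma:angelia2}), a consequence of $B$-strong connectivity and the uniform bounds $\underline{a},\bar{a}$ alone, so the $\phi$-weighted drift is automatically linear in $k$ with a strictly negative coefficient. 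What \eqref{eq:inf_often} actually buys is (i) ergodicity of $\{T_k\}$ despite the vanishing off-diagonal weights, and (ii) the $o(k)$ bound on $\max_i\varphi^i_k-\min_i\varphi^i_k$.

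The mechanism you are missing is the randomization device: write $T_k=(1-\lambda_k)I+\lambda_k A_k=\mathbb{E}\bigl[A_k^{\Lambda_k}\bigr]$ with independent Bernoulli variables $\Lambda_k$, $P(\Lambda_k=1)=\lambda_k$, where $\{A_k\}$ has nonzero entries uniformly bounded below by $\eta=\min\{\underline{a},\bar{a}\}$. The block event $\{\beta_k=1\}=\{\Lambda_i=1$ for all $i$ in one $B$-window$\}$ has probability $\prod_{i=kB}^{(k+1)B-1}\lambda_i$; condition~\eqref{eq:inf_often} makes these probabilities summably divergent, so the second Borel--Cantelli lemma gives infinitely many full blocks a.s., hence ergodicity, and the convexity and submultiplicativity of the ergodicity coefficient $\pi(\cdot)$ convert this into the deterministic estimate $\sum_{t\leq k}\mathbb{E}[\pi(Z_t)]\lesssim \bigl((1-\sigma)\alpha_k\bigr)^{-1}=o(k)$ with $\alpha_k=\min_{r\leq k}\prod_{i=rB}^{(r+1)B-1}\lambda_i$ and $\sigma=(1-\eta^{nB})^{1/nB}$. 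Your alternative of directly lower-bounding $[T_{k-1:t}]_{ij}$ and arguing that ``the sum telescopes'' does not obviously survive $B>1$ with non-persistent links, since individual entries of $T_{k-1:t}$ can genuinely vanish along subsequences; it is the expectation over the $\{\Lambda_k\}$, not a pathwise bound, that saves the argument. As a minor point, your claim that $\mathbb{E}[L^i_t]<0$ only ``for at least one $i$'' is the more careful reading of Assumption~\ref{assum:no_conflict}, and it still suffices because $\phi^i_t\geq\delta/n$ for every $i$; but without the two ingredients above the proposal does not yet constitute a proof.
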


Condition~\eqref{eq:inf_often}, on the rate of decrease on $\{\lambda_k\}$, states that sequential products of the form $\prod_{i=kB}^{(k+1)B-1} \lambda_i$ should not decrease too fast. Particularly, for a connectivity parameter $B$ it is sufficient to guarantee that $\lambda_k > O(k^{-1/B})$. That is, the total decreases in the time period of size $B$ should not be faster than $O(1/k)$. Moreover, if $B=1$ we recover the same condition as in~\cite{molavi2018theory,demarzo2003persuasion}. Additionally, \eqref{eq:inf_often} implies that $\sum_{k=1}^\infty \prod_{i=kB}^{(k+1)B-1} \lambda_i = \infty$.

\section{Consistency of Social Learning with Increasing Self-Confidence}\label{sec:consistency}

In this section, we prove our main result in Theorem~\ref{thm:main}. Initially, we provide a fundamental assumption about the existence of a strictly positive lower bound on the weights a node assigns to the information coming from other nodes.

\begin{assumption}\label{assum:lower_bound}
	For each $k$, the matrix $A_k$ is stochastic, i.e., $\sum_{j=1}^n [A_k]_{ij} =1$ for $i \in V$, with positive diagonal entries. Additionally, there exists a constant $\eta>0$ such that if $[A_k]_{ij}>0$ then $[A_k]_{ij}>\eta$.
\end{assumption}

With Assumption~\ref{assum:lower_bound} at hand,  we state a well-known result about the ergodicity of the backward product of row stochastic matrices.

\begin{lemma}[Lemma $2$ in~\cite{ned13}]\label{lemma:angelia}
	Suppose that the graph sequence $\{\mathcal{G}_k\}$ is uniformly strongly connected and let Assumption~\ref{assum:lower_bound} hold. Then, for each integer $s \geq 0$, there is a stochastic vector $\phi_s$ such that for all $i,j$ and $k>s$
	\begin{align*}
	|[A_k A_{k-1} \dots A_{s+1}A_s]_{ij} -\phi^j_s  | & \leq 2\left(\left( 1 - \eta^{nB} \right)^{\frac{1}{nB}}\right)^{k-s}.
	\end{align*}
\end{lemma}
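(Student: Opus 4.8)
The plan is to follow the classical two‑step route for proving ergodicity of backward products over uniformly connected graphs: first show that products taken over windows of length $O(nB)$ have a column whose entries are all bounded below by a fixed positive constant, and then convert this ``uniformly positive column'' property into a geometric contraction of the spread of every column of $A_{k:s}$.

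\emph{A uniformly positive column.} Fix a reference node $v\in V$ (any node works). I would track how the positive entries in column $v$ of $A_{\tau:s}$ spread as $\tau$ grows from $s$: let $R_\tau=\{\,i\in V:[A_{\tau:s}]_{iv}>0\,\}$; it contains $v$, and it is nondecreasing in $\tau$ because every $A_k$ has a positive diagonal, so once a node has positive weight on $v$ it keeps it. Within each length‑$B$ block (aligned with the blocking used in the definition of $B$‑strong connectivity), $B$‑strong connectivity gives an edge leaving $R_\tau$ unless $R_\tau=V$ already; that edge, present at some instant of the block, brings at least one new node into $R$ by the end of the block, so $R$ reaches all of $V$ within at most $(n-1)B\le nB$ steps. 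Following one such growth path from $v$ to an arbitrary $i$ and idling (via self‑loops) afterwards produces a chain of positive entries of $A_{s+nB-1:s}$ with $nB$ factors, each at least $\eta$; hence $[A_{s+nB-1:s}]_{iv}\ge\eta^{nB}$ for every $i$. The delicate point — the step I expect to be the main obstacle — is the bookkeeping needed to make this uniform in $s$ (windows starting at a non‑multiple of $B$, for which one uses windows of length $\le 2B$) and to match exactly the constant $\eta^{nB}$ in the statement; the careful version is in~\cite{ned13}.

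\emph{Contraction of oscillations.} For a column index $j$ and a finite product $P$, put $\Delta_j(P)=\max_i[P]_{ij}-\min_i[P]_{ij}\in[0,1]$. Two elementary facts suffice: (i) if $G$ is row‑stochastic then $\Delta_j(GP)\le\Delta_j(P)$, since each entry of column $j$ of $GP$ is a convex combination of the entries of column $j$ of $P$; and (ii) if in addition $[G]_{iv}\ge\delta$ for all $i$, then $\Delta_j(GP)\le(1-\delta)\Delta_j(P)$, obtained by writing $[GP]_{ij}-[GP]_{i'j}=\sum_\ell([G]_{i\ell}-[G]_{i'\ell})[P]_{\ell j}$, separating the positive and negative parts of the coefficients, and using that the positive part has total mass $1-\sum_\ell\min\{[G]_{i\ell},[G]_{i'\ell}\}\le1-\delta$. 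Splitting $A_{k:s}$ into $q=\lfloor(k-s)/(nB)\rfloor$ consecutive blocks of length $nB$ — each with a uniformly positive column $v$ by the previous step, so (ii) applies with $\delta=\eta^{nB}$ — composed with a leftover factor handled by (i), and using that one block alone already has every column's spread at most $1-\eta^{nB}$, I get $\Delta_j(A_{k:s})\le(1-\eta^{nB})^{q}$. Since $q>(k-s)/(nB)-1$ and $\eta^{nB}\le\tfrac12$ (for $n\ge2$ some row has a positive off‑diagonal entry, forcing $\eta\le\tfrac12$; the case $n=1$ is trivial), this yields $\Delta_j(A_{k:s})\le2\bigl((1-\eta^{nB})^{1/(nB)}\bigr)^{k-s}$ for all $k>s$ and all $j$.

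\emph{Existence of the limit.} For fixed $s$ and $j$, appending a row‑stochastic matrix on the left replaces every entry of column $j$ by a convex combination of those entries, so $k\mapsto\min_i[A_{k:s}]_{ij}$ is nondecreasing and $k\mapsto\max_i[A_{k:s}]_{ij}$ is nonincreasing, while their gap $\Delta_j(A_{k:s})\to0$ by the previous step; hence both converge to a common value $\phi_s^j$. The vector $\phi_s=(\phi_s^1,\dots,\phi_s^n)^{T}$ is stochastic, being the entrywise limit of the stochastic rows of $A_{k:s}$. Finally $\phi_s^j$ lies between $\min_i[A_{k:s}]_{ij}$ and $\max_i[A_{k:s}]_{ij}$, so $|[A_{k:s}]_{ij}-\phi_s^j|\le\Delta_j(A_{k:s})\le2\bigl((1-\eta^{nB})^{1/(nB)}\bigr)^{k-s}$, which is exactly the assertion (and in particular exhibits $\{A_k\}$ as ergodic with limit $\boldsymbol{1}\phi_s^{T}$).
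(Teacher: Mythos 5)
This lemma is imported verbatim from~\cite{ned13}; the paper gives no proof of its own, so there is nothing internal to compare against. Your reconstruction --- uniform positivity of a fixed column over windows of length $nB$ via monotone growth of the set $R_\tau$, then a Dobrushin-type contraction of the column oscillation $\Delta_j$, and finally the monotone min/max squeeze to extract $\phi_s$, with the factor $2$ accounted for by $\eta^{nB}\le\tfrac12$ --- is correct and is essentially the standard argument of the cited reference.
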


One immediate conclusion from Lemma~\ref{lemma:angelia} the sequence $\{A_k\}$ is ergodic.

The next auxiliary lemma states that the entries of the sequence of stochastic vectors $\{\phi_k\}$ are lower bounded by a strictly positive value that depends on the structure of $\{\mathcal{G}_k\}$.

\begin{lemma}[Lemma~$4$ and Corollary~$2$ in~\cite{ned13}]\label{lemma:angelia2}
	Given a graph sequence $\{\mathcal{G}_k\}$, define
	\begin{align*}
	\delta & \triangleq \inf_{k \geq 0} \left(\min_{i \in V}[ \boldsymbol{1}^T A_kA_{k-1}\dots A_0 ]_i\right).
	\end{align*}
	If the graph sequence $\{\mathcal{G}_k\}$ is $B$-strongly connected, then $\delta \geq \eta^{nB}$. Moreover, if $A_k$ is doubly stochastic for all $k \geq 0 $ or if $\{\mathcal{G}_k\}$ is regular, then $\delta = 1$. Furthermore, $\phi_s$ as defined in Lemma~\ref{lemma:angelia} satisfies $\phi^i_k \geq \delta/n$ for all $k\geq 0 $ and $i \in V$.
\end{lemma}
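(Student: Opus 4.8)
The plan is to reduce all three assertions to the behavior of the column sums of the backward products $A_{k:s}$, since the row vector $\boldsymbol{1}^T A_{k:s}$ collects exactly those column sums. By Lemma~\ref{lemma:angelia} we have $A_{k:s}\to\boldsymbol{1}\phi_s^T$, so each entry converges, $[A_{k:s}]_{ij}\to\phi_s^j$, and summing over $i$ gives $[\boldsymbol{1}^T A_{k:s}]_j\to n\phi_s^j$. Hence $\phi_s^j=\frac1n\lim_{k}[\boldsymbol{1}^T A_{k:s}]_j$, and the whole lemma becomes a statement about uniformly lower-bounding the column sums of long products. Passing $A_{k:s}=A_{k:s+1}A_s$ to the limit also records that $\phi_s^T=\phi_{s+1}^T A_s$, i.e. $\{\phi_s\}$ is an absolute probability sequence; this is what lets me transport column-sum bounds across starting times.

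First I would prove $\delta\ge\eta^{nB}$. The key sub-step is a reachability estimate: under $B$-strong connectivity and Assumption~\ref{assum:lower_bound}, the product of any $nB$ consecutive matrices is entrywise at least $\eta^{nB}$. To see this I track the set of nodes reachable from a fixed $i$ by time-respecting paths; because each aligned window of length $B$ has a strongly connected union graph, there is an edge leaving any proper reachable set within that window, so the reachable set grows by at least one node per window and covers $V$ within $(n-1)B\le nB$ steps. Filling the remaining steps with the positive diagonal entries ($\ge\eta$) exhibits, for every ordered pair, a path of at most $nB$ factors each $\ge\eta$, giving the entrywise bound $\eta^{nB}$. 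Given this, for $k\ge nB-1$ I factor $A_{k:0}=A_{k:nB}\,A_{nB-1:0}$ with the block $A_{nB-1:0}$ entrywise $\ge\eta^{nB}$; writing $[\boldsymbol{1}^T A_{k:0}]_i=\sum_\ell(\text{colsum}_\ell A_{k:nB})[A_{nB-1:0}]_{\ell i}$ and using that the column sums of the row-stochastic factor $A_{k:nB}$ total $n$, each column sum of $A_{k:0}$ is at least $n\eta^{nB}$. For the short range $0\le k<nB$ the diagonal entry already gives $[\boldsymbol{1}^T A_{k:0}]_i\ge[A_{k:0}]_{ii}\ge\eta^{k+1}\ge\eta^{nB}$. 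Taking the infimum yields $\delta\ge\eta^{nB}$, and the same factorization run with an arbitrary left endpoint shows every product $A_{k:s}$ obeys the same floor.

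For the equality $\delta=1$, note first that $\delta\le1$ always, since the minimum column sum of a row-stochastic product is at most the average column sum $n/n=1$. If each $A_k$ is doubly stochastic then so is every product $A_{k:0}$, whence all its column sums equal $1$ and $\delta=1$; the regular case is identical once one observes that regularity makes the relevant products column-stochastic as well, so again every column sum equals $1$.

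Finally, for $\phi_s^i\ge\delta/n$ I would use $n\phi_s^j=\lim_k[\boldsymbol{1}^T A_{k:s}]_j$. For $s=0$ this is immediate: $\delta$ is a lower bound for $[\boldsymbol{1}^T A_{k:0}]_j$ at every $k$, hence for the limit, giving $\phi_0^j\ge\delta/n$. For general $s$ I apply the column-sum floor established above to the time-shifted chain $\{A_k\}_{k\ge s}$ (which inherits $B$-strong connectivity and Assumption~\ref{assum:lower_bound}), so that $[\boldsymbol{1}^T A_{k:s}]_j$ is bounded below by the same $\delta$ and the limit again yields $\phi_s^j\ge\delta/n$; the relation $\phi_s^T=\phi_{s+1}^T A_s$ guarantees these start-shifted bounds genuinely pertain to the single sequence $\{\phi_s\}$. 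I expect the main obstacle to be precisely this last point, namely showing the floor $\delta$ is uniform in the starting index $s$, so that a quantity defined through products anchored at $A_0$ still controls $\phi_s$ for every $s$, together with the reachability estimate that converts $B$-strong connectivity and positive diagonals into the entrywise bound $\eta^{nB}$ on $nB$-fold products, which is the engine behind every part of the statement.
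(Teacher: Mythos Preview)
The paper does not prove this lemma at all; it is imported verbatim from \cite{ned13} (Lemma~4 and Corollary~2 there) and stated without proof. There is therefore no ``paper's own proof'' to compare your proposal against.

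On its own merits your plan is essentially correct. The reduction to column sums via $\boldsymbol{1}^T A_{k:s}\to n\phi_s^T$ is the right viewpoint, and the reachability argument converting $B$-strong connectivity plus positive diagonals into an entrywise lower bound $\eta^{nB}$ on $nB$-fold products is the standard engine. Two places would benefit from tightening. First, your claim that \emph{any} $nB$ consecutive factors are entrywise $\ge\eta^{nB}$ glosses over the alignment of the $B$-windows: the definition of $B$-strong connectivity uses windows $[kB,(k+1)B-1]$, so a block starting at an arbitrary index $s$ contains only $n-1$ full aligned windows in general. This is still enough (you need $n-1$ growth steps to cover $V$), but the write-up should say so explicitly; this is precisely the issue you flag as the ``main obstacle,'' and it is resolvable, not just identifiable. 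Second, the ``regular'' case is asserted rather than argued: you should state what regularity of $\{\mathcal G_k\}$ means in this setting and why it forces the products to be column-stochastic, since that is not automatic from row-stochasticity alone. The doubly stochastic case and the final step $\phi_s^j\ge\delta/n$ are handled correctly.
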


Next, we state our first auxiliary result that will be fundamental to the construction of the proof of Theorem~\ref{thm:main}. Particularly, we provide a condition on the sequence $\{\lambda_k\}$ such that the $\{T_k \}$, for which Assumption~\ref{assum:bounds_on_a} holds, is ergodic. The next lemma will limit the rate at which the self-confidence increases such that we can guarantee there is sufficient mixing among the nodes in the network.

\begin{lemma}\label{lemma:inf_often}
	Suppose Assumption~\ref{assum:bounds_on_a} holds for a $B$-strongly connected sequence of graphs $\{\mathcal{G}_k\}$. If~\eqref{eq:inf_often} holds. Then, $\{T_k\}$ is ergodic.
\end{lemma}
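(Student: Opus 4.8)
The plan is to reduce $\{T_k\}$ to a sequence satisfying Assumption~\ref{assum:lower_bound} and then transfer ergodicity back through a randomization argument. First I would write $T_k = (1-\lambda_k)I + \lambda_k S_k$ with $S_k := \lambda_k^{-1}\big(T_k-(1-\lambda_k)I\big)$. A direct verification from Assumption~\ref{assum:bounds_on_a} shows that each $S_k$ is row-stochastic, has exactly the same zero pattern as $T_k$ (hence is compliant with $\mathcal G_k$), and satisfies $[S_k]_{ij}\ge\underline a$ whenever $(j,i)\in\mathcal E_k$, $i\ne j$, and $[S_k]_{ii}\ge 1-\bar a$. Therefore $\{S_k\}$ is $B$-strongly connected and satisfies Assumption~\ref{assum:lower_bound} with $\eta=\min\{\underline a,\,1-\bar a\}$, so Lemma~\ref{lemma:angelia} applies: $\{S_k\}$ is ergodic with a geometric rate, and by Lemma~\ref{lemma:angelia2} the associated stochastic vectors $\phi_s$ obey $\phi_s^i\ge \eta^{nB}/n>0$. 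Note that the $\lambda_k$'s have been pushed out of $S_k$ entirely.

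Next I would couple $\{T_k\}$ to $\{S_k\}$ probabilistically. Let $\{\xi_k\}$ be independent Bernoulli random variables with $\mathbb P(\xi_k=1)=\lambda_k$, and set $R_k:=\xi_kS_k+(1-\xi_k)I$, so $\mathbb E[R_k]=T_k$ and, by independence, $\mathbb E[R_{k:s}]=T_{k:s}$ for all $k\ge s$. Hence it suffices to show that, almost surely, the thinned product $R_{k:s}$ — which is just the ordered product of those $S_t$ with $\xi_t=1$, the rest contributing identities — converges as $k\to\infty$ to a rank-one stochastic matrix: if $R_{k:s}\to \boldsymbol 1 W_s^T$ a.s., then since $\|R_{k:s}-\boldsymbol 1W_s^T\|$ is bounded, dominated convergence gives $T_{k:s}=\mathbb E[R_{k:s}]\to\boldsymbol 1\,\mathbb E[W_s]^T$, the required ergodic limit. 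For the thinned sequence one uses that \eqref{eq:inf_often} implies $\sum_{m\ge 0}\prod_{t=mB}^{(m+1)B-1}\lambda_t=\infty$; since the events ``block $m$ is fully activated'' ($\xi_t=1$ for all $t$ in $[mB,(m+1)B)$) are independent with exactly these probabilities, the second Borel--Cantelli lemma gives that, almost surely, infinitely many length-$B$ blocks are fully activated, and on each of them the thinned graph equals $\mathcal G_t$ on that block, whose union is strongly connected. One then argues, using the estimates behind Lemma~\ref{lemma:angelia} along these fully-activated runs, that $R_{k:s}\to\boldsymbol 1 W_s^T$ a.s.

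The delicate point — and where I expect the real work to be — is precisely that last step: although fully-activated blocks occur infinitely often almost surely, their frequency decays (the activation probability of block $m$ is $\prod_{t=mB}^{(m+1)B-1}\lambda_t\to0$), so the gaps between them grow and the contraction produced by consecutive good runs is \emph{not} uniformly bounded away from $1$; one cannot simply multiply Dobrushin coefficients over windows. The robust way to handle this, which I would fall back on, is a quadratic-Lyapunov analysis in the spirit of Touri--Nedi\'c: with $\{\pi_k\}$ an absolute probability sequence for $\{T_k\}$ and $V_k(y)=\sum_i\pi_k^i\big(y_i-\pi_k^Ty\big)^2$ evaluated along $y_k=T_{k-1:s}x$, the one-step decrease of $V_k$ equals $\tfrac12\sum_i\pi_{k+1}^i\sum_{j,l}[T_k]_{ij}[T_k]_{il}(y_k^j-y_k^l)^2$; summing in $k$, using $[T_k]_{ii}[T_k]_{ij}\ge(1-\bar a)\underline a\,\lambda_k$ on edges of $\mathcal G_k$, transferring edge gaps across a block via $\|y_{k+1}-y_k\|_\infty=O(\lambda_k)$ together with $B$-strong connectivity, and invoking $\sum_m\prod_{t=mB}^{(m+1)B-1}\lambda_t=\infty$, one forces $V_k\to0$, i.e.\ consensus, hence ergodicity. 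For that route the one genuinely nontrivial ingredient is a uniform lower bound on the entries of the absolute probability sequence of $\{T_k\}$; this holds — already the two-node case shows it, since the decaying off-diagonal weights of $T_k$ all decay at the comparable rate $\Theta(\lambda_k)$ so the ``direction'' of the chain stays balanced — but it needs its own short lemma, ultimately leaning again on the $B$-strong connectivity and on Lemma~\ref{lemma:angelia2} for $\{S_k\}$.
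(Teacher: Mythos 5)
Your proposal follows the same skeleton as the paper's proof: the decomposition $T_k=(1-\lambda_k)I+\lambda_k A_k$ (your $S_k$ is the paper's $A_k$; note that your constant $\eta=\min\{\underline a,\,1-\bar a\}$ is the correct one, whereas the paper writes $\min\{\underline a,\bar a\}$, apparently a typo), the Bernoulli randomization with $\mathbb E[R_{k:s}]=T_{k:s}$, and the second Borel--Cantelli lemma applied to the fully-activated length-$B$ blocks, whose probabilities are exactly $\prod_{i=kB}^{(k+1)B-1}\lambda_i$, so that \eqref{eq:inf_often} gives infinitely many such blocks almost surely. Where you genuinely depart from the paper is the final step. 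The paper simply asserts that infinitely many fully-activated blocks allow one to invoke Lemma~\ref{lemma:angelia} and conclude $\lim_k\mathbb E[Z_k]=\boldsymbol{1}\phi_s^T$; it does not confront the point you raise, namely that the thinned sequence is not uniformly strongly connected because the gaps between fully-activated blocks grow, so the contraction extracted from successive good runs is not uniformly bounded away from $1$ and Lemma~\ref{lemma:angelia} does not apply verbatim. You are right that this is the delicate step (the same issue reappears in the proof of Theorem~\ref{thm:main}, where the bound on $\pi(Z_i)$ in terms of $\beta_t+\cdots+\beta_s$ silently ignores the partially-activated factors between good blocks). Your fallback through the Touri--Nedi\'c quadratic comparison function --- equivalently, the infinite-flow property, since under Assumption~\ref{assum:bounds_on_a} the flow across any cut over block $m$ is at least $\underline a\prod_{i=mB}^{(m+1)B-1}\lambda_i$, whose sum diverges by \eqref{eq:inf_often} --- is a legitimate and more robust route: it yields ergodicity deterministically, without the coupling, at the price of the auxiliary lemma you correctly identify (a uniform positive lower bound on the absolute probability sequence of $\{T_k\}$ itself). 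Your dominated-convergence step for passing from almost-sure convergence of $R_{k:s}$ to convergence of $T_{k:s}=\mathbb E[R_{k:s}]$ is also a detail the paper leaves implicit and is worth keeping.
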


\begin{proof}
	Initially, it follows from Assumption~\ref{assum:bounds_on_a}, that each member of the sequence of weight matrices $\{T_k\}$ can be written as
	\begin{align*}
	T_k & = (1 - \lambda_k) I + \lambda_k A_k,
	\end{align*}
	where $\{A_k\}$ is a sequence of stochastic matrices whose nonzero elements are uniformly lowered bounded by \mbox{$\eta = \min\{\underline{a},\bar{a}\} \in  (0,1)$}, i.e., $[A_k]_{ij} \geq \eta$ for all $(j,i)\in \mathcal{E}_k$.
	
	Following the approach proposed in~\cite{demarzo2003persuasion}, we define $\{\Lambda_k\}$ as a sequence of independent Bernoulli random variables where $P(\Lambda_k = 1) = \lambda_k $, and   $P(\Lambda_k = 0) = 1-\lambda_k $. Therefore, we can write each of the elements of the sequence of matrices $\{T_k\}$ as
	\begin{align*}
	T_k &= \mathbb{E}[(1-\Lambda_k)I + \Lambda_k A_k],
	\end{align*}
	where the expectation is taken with respect to the random variable $\Lambda_k$. Moreover, define the random matrix $Z_k$ by
	\begin{align*}
	Z_k & = \prod_{t=s}^{k} [(1-\Lambda_t)I + \Lambda_t A_t]  = \prod_{t=s}^{k} A_t^{\Lambda_t}. 
	\end{align*}
	
	Thus,
	\begin{align*}
	\mathbb{E}[Z_k] &=\prod_{t=s}^k T_t  = \prod_{t=s}^k \mathbb{E}[(1-\Lambda_k)I + \Lambda_k A_k].
	\end{align*}
	
	Now, define the sequence of random variables $\{\beta_k\}$ by
	\begin{align}\label{eq:betas}
	\beta_k & = \begin{cases}
	1 &\text{if  $\sum_{i=kB}^{(k+1)B-1} \Lambda_i = B$},\\
	0 &\text{otherwise}.
	\end{cases} 
	\end{align}
	
	The random variable $\beta_k$ serves as an indicator function for the event $\{\Lambda_i = 1 \ \forall i \in (kB, \dots (k+1)B-1) \}$. Particularly, if $\{\beta_k =1\}$, then the product $\prod_{i=kB}^{(k+1)B-1} A_i^{\Lambda_t} = \prod_{i=kB}^{(k+1)B-1} A_i$. Moreover, if the event $\{\beta_k = 1\}$ occurs infinitely many times, then from Lemma~\ref{lemma:angelia} it follows that
	\begin{align}\label{eq:inf_prod_occurs}
	\lim_{k \to \infty } \mathbb{E}[Z_k] &= \lim_{k \to \infty } \prod_{t=s}^k T_t =  \boldsymbol{1}\phi_s^T.
	\end{align}
	
	Therefore, to complete the proof we need to show that if~\eqref{eq:inf_often} holds, then the event $\{\beta_k = 1\}$ occurs infinitely often. Initially, note that 
	\begin{align*}
	P(\beta_k =1) & = \prod_{i=kB}^{(k+1)B-1} P(\Lambda_i = 1) \\
	& = \prod_{i=kB}^{(k+1)B-1} \lambda_i.
	\end{align*}
	
	Thus, by the Borel-Cantelli lemma, if \mbox{$\sum_{k=1}^\infty  P(\beta_k=1)  = \infty$}, then \mbox{$P(\{\beta_k=1\} \ \text{infinitely often} ) = 1$}.
	
	Moreover, if~\ref{eq:inf_often} holds then
	\begin{align*}
	\sum_{k=1}^\infty  P(\beta_k=1)  = \sum_{k=1}^\infty \prod_{i=kB}^{(k+1)B-1} \lambda_i = \infty,
	\end{align*}
	and the desired result holds.
\end{proof}

Lemma~\ref{lemma:inf_often} shows that if~\eqref{eq:inf_often} holds, then $\{T_k\}$ is ergodic. That is, even if the weights, an agent assigns to its neighbors, decays to zero, if the rate at which that happens is sufficiently slow, then the resulting infinite product is equivalent to an infinite product of row stochastic matrices with lower bounded entries.  

The next lemma states the existence of an absolute probability sequence for the Markov chain generated by $\{T_k\}$~\cite{touri2012product}. Later in the proof of our main theorem, we will make use of this absolute probability sequence.

\begin{lemma}
	There exists an absolute probability $\{\phi_k\}$ sequence for the chain $\{T_k\}$, i.e., 
	\begin{align*}
	\phi_{k+1}^T T_k T_{k-1}\dots T_s & = \phi_s^T.
	\end{align*} 
\end{lemma}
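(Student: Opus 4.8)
The plan is to take the sought absolute probability sequence to be exactly the family of ergodic limit vectors already available to us, and then to verify the defining identity by passing to the limit in the elementary product decomposition $T_{k:s}=T_{k:s+1}\,T_s$.

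First I would invoke Lemma~\ref{lemma:inf_often}: since~\eqref{eq:inf_often} holds, $\{T_k\}$ is ergodic, so by the definition of ergodicity there is, for every $s\geq 0$, a stochastic vector $\phi_s$ with $\lim_{k\to\infty}T_{k:s}=\boldsymbol{1}\phi_s^T$ (each $\phi_s$ is automatically stochastic, being the common row of the limit of the row-stochastic matrices $T_{k:s}$). Fix $s\geq 0$. For every $k>s$ we have the semigroup identity $T_{k:s}=T_{k:s+1}\,T_s$. Letting $k\to\infty$ and using continuity of matrix multiplication, the left-hand side converges to $\boldsymbol{1}\phi_s^T$ while the right-hand side converges to $(\boldsymbol{1}\phi_{s+1}^T)\,T_s=\boldsymbol{1}(\phi_{s+1}^T T_s)$. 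Comparing rows yields the one-step relation $\phi_s^T=\phi_{s+1}^T T_s$ for all $s\geq 0$, which is precisely the statement that $\{\phi_k\}$ is an absolute probability sequence for $\{T_k\}$.

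The claimed identity then follows by iterating the one-step relation:
\[
\phi_{k+1}^T T_k T_{k-1}\cdots T_s=\phi_k^T T_{k-1}\cdots T_s=\cdots=\phi_{s+1}^T T_s=\phi_s^T,
\]
for all $k\geq s\geq 0$. There is no genuine obstacle in this argument; the only points needing (minor) care are that the limit vectors $\phi_s$ inherit nonnegativity and the normalization $\boldsymbol{1}^T\phi_s=1$ (immediate from the definition of ergodicity in Definition~\ref{def} of~\cite{touri2012product}), and that the time indices align correctly when chaining $\phi_s^T=\phi_{s+1}^T T_s$. I would also remark that existence of an absolute probability sequence in fact holds for an arbitrary sequence of row-stochastic matrices by a classical theorem of Kolmogorov (see~\cite{touri2012product}); ergodicity of $\{T_k\}$ is used here only to identify this sequence with the explicit limit vectors $\{\phi_s\}$ that reappear in the proof of Theorem~\ref{thm:main}.
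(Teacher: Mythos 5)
Your proof is correct, but it takes a different route from the paper. The paper disposes of this lemma in one line by citing Lemma~4.4 of~\cite{touri2012product} together with the ergodicity established in Lemma~\ref{lemma:inf_often}; the existence of an absolute probability sequence for an arbitrary row-stochastic chain is a classical fact (going back to Kolmogorov), and the paper simply imports it. You instead give a self-contained derivation: take $\phi_s$ to be the ergodic limit vectors, pass to the limit in the identity $T_{k:s}=T_{k:s+1}T_s$ to get the one-step relation $\phi_s^T=\phi_{s+1}^T T_s$, and iterate. This argument is sound --- the limits exist by ergodicity, matrix multiplication is continuous in finite dimensions, and the limit of products of row-stochastic matrices is row-stochastic, so each $\phi_s$ is indeed a stochastic vector. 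What your approach buys is worth noting: it not only proves existence but explicitly identifies the absolute probability sequence with the ergodic limit vectors $\{\phi_s\}$ of Lemma~\ref{lemma:angelia}, which is the identification actually needed later when the lower bound $\phi_k^i\geq \delta/n$ from Lemma~\ref{lemma:angelia2} is applied in the proof of Theorem~\ref{thm:main}. The citation-based proof establishes existence for general chains without ergodicity, but leaves that identification implicit. The only cosmetic issue is your reference to a nonexistent ``Definition'' label; the relevant definition of ergodicity is Definition~2 in the paper.
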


\begin{proof}
	This result follows immediately from Lemma~\ref{lemma:inf_often} and Lemma~$4.4$ in~\cite{touri2012product}.    
\end{proof}

Now, we are ready to prove our main result regarding the conditions for a group of agents with increasing self-confidence to reach social learning.

\begin{proof}[Theorem~\ref{thm:main}]
	Initially, define the random variables
	\begin{align*}
	\varphi^i_k(\theta,\theta^*) & = \log \frac{\mu^i_k(\theta)}{\mu^i_k(\theta^*)}, \text{ and } 
	\mathcal{L}^i_{\theta,\theta^*}(X^i_{k} )   = \log \frac{p^i_\theta(X^i_{k})}{p^i_{\theta^*}(X^i_{k})}.
	\end{align*}
	
	Thus, it follows form the update rule~\eqref{eq:update} that
	\begin{align*}
	\varphi^i_{k+1}(\theta,\theta^*)  & = \sum_{j=1}^{n}[T_k]_{ij} \varphi^j_k(\theta,\theta^*) + \log \frac{p^i_\theta(X^i_{k+1})}{p^i_{\theta^*}(X^i_{k+1})},
	\end{align*}
	or equivalently, by stacking all entries of $\varphi^i_{k+1}(\theta,\theta^*)$ and $\mathcal{L}^i_{\theta,\theta^*}(X^i_{k} )$ into single column vectors $\varphi_{k+1}(\theta,\theta^*)$ and $\mathcal{L}_{\theta,\theta^*}(X^i_{k} )$, where $[\varphi_{k+1}(\theta,\theta^*)]_i = \varphi^i_{k+1}(\theta,\theta^*)$ and $[\mathcal{L}_{\theta,\theta^*}(X^i_{k} ) ]_i = \mathcal{L}^i_{\theta,\theta^*}(X^i_{k} ) $,
	\begin{align}\label{eq:phi_vector}
	\varphi_{k+1}(\theta,\theta^*) & = T_k \varphi_k(\theta,\theta^*) + \mathcal{L}_{\theta,\theta^*}(X^i_{k+1} ) \\
	\varphi_{k}(\theta,\theta^*)& =  \sum_{t=1}^{k-1} T_{k-1:t}\mathcal{L}_{\theta,\theta^*}(X^i_{t} ) + \mathcal{L}_{\theta,\theta^*}(X^i_{k} ),
	\end{align}
	where we have assumed without loss of generality that $\varphi^i_0(\theta,\theta^*) =0$ for all $\theta \in \Theta$ and all $i\in V$, which is equivalent to all agents having uniform beliefs at time $k=0$.
	
	To complete the proof, we first show that $\limsup_{k \to \infty} \frac{1}{k} \sum_{i=1}^{n} \phi^i_k \varphi^i_k(\theta,\theta^*) <0$. This will imply that the weighted sum of belief the beliefs of all neighbors on the wrong hypothesis will asymptotically converge to zero.
	
	If we pre-multiply~\eqref{eq:phi_vector} by the absolute probability sequence $\phi_k$ from Lemma~\ref{lemma:inf_often}, we have that
	\begin{align*}
	\sum_{i=1}^n \phi_k^i\varphi^i_{k}(\theta,\theta^*) 
	& =  \sum_{t=1}^{k} \sum_{i=1}^n \phi_t^i \mathcal{L}^i_{\theta,\theta^*}(X^i_{t} ) .    
	\end{align*}
	
	Moreover, by adding and subtracting $\mathbb{E}[\mathcal{L}^i_{\theta,\theta^*}(x^i_{t} )]$, it follows that
	\begin{align}\label{eq:add_sub}
	\sum_{i=1}^n \phi_k^i\varphi^i_{k}(\theta,\theta^*) & =  \sum_{t=1}^{k} \sum_{i=1}^n \phi_t^i \left( \mathcal{L}^i_{\theta,\theta^*}(X^i_{t} ) - \mathbb{E}[\mathcal{L}^i_{\theta,\theta^*}(X^i_{t} )]\right) + \nonumber \\
	& \qquad +\sum_{t=1}^{k} \sum_{i=1}^n \phi_t^i \mathbb{E}[\mathcal{L}^i_{\theta,\theta^*}(X^i_{t} )].
	\end{align}
	
	Furthermore, by dividing by $k$ on both sides and taking the limit supremum as $k \to \infty$, the first term on the right in~\eqref{eq:add_sub} goes to zero almost surely by the strong law of large numbers, thus
	\begin{align}\label{eq:asymptotic_rate}
	&\limsup_{k \to \infty} \frac{1}{k}\sum_{i=1}^n \phi_k^i\varphi^i_{k}(\theta,\theta^*) \nonumber\\
	& =  \limsup_{k \to \infty}  \frac{1}{k} \sum_{t=1}^{k} \sum_{i=1}^n \phi_t^i \mathbb{E}[\mathcal{L}^i_{\theta,\theta^*}(X^i_{t} )] \nonumber\\
	& = \sum_{i=1}^{n}  \phi_t^i \left(D_{KL}(P^i\|P^i_{\theta^*}) -D_{KL}(P^i\|P^i_{\theta}) \right) \nonumber \\
	& = \frac{\delta}{n}\sum_{i=1}^{n}  \left(D_{KL}(P^i\|P^i_{\theta^*}) -D_{KL}(P^i\|P^i_{\theta}) \right) \nonumber\\
	& < 0,
	\end{align}
	where we have used Lemma~\ref{lemma:angelia2} and the fact that $\mathbb{E}[\mathcal{L}^i_{\theta,\theta^*}(X^i_{t} )] = D_{KL}(P^i\|P^i_{\theta^*}) -D_{KL}(P^i\|P^i_{\theta})$. Moreover, by Assumption~\ref{assum:no_conflict} it holds that $D_{KL}(P^i\|P^i_{\theta^*}) < D_{KL}(P^i\|P^i_{\theta})$.
	
	The relation in~\eqref{eq:asymptotic_rate} shows that for at least one of the agents, the beliefs on the non-optimal hypotheses $\theta\notin \Theta^*$ will asymptotically go to zero. To complete the proof, we proceed to show that the difference between the beliefs among the agents decays to zero as well, which in turns implies that all agents eventually assign a zero belief to the non-optimal hypotheses.
	
	Now, following the same approach as in~\cite{molavi2018theory}, we proceed to bound the asymptotic difference between the logarithmic ratio of beliefs among the two agents with the most separate beliefs. Initially we have that,
	\begin{align*}
	&\max_{i\in V} \varphi^i_k(\theta,\theta^*) - \min_{i\in V} \varphi^i_k(\theta,\theta^*)  \\
	&\qquad\leq   \max_{i\in V} \mathcal{L}^i_{\theta,\theta^*}(X^i_{k} ) - \min_{i\in V} \mathcal{L}^i_{\theta,\theta^*}(X^i_{k} ) +\\
	&\qquad \qquad +  \sum_{t=1}^{k-1}\pi(T_{k-1}\dots T_{t+1}T_{t}) \times \\
	&\qquad  \qquad \times \left(\max_{i\in V} \mathcal{L}^i_{\theta,\theta^*}(X^i_{t} ) - \min_{i\in V} \mathcal{L}^i_{\theta,\theta^*}(X^i_{t} )\right) \\
	& \qquad \leq \sum_{t=1}^{k}\pi\left(\mathbb{E}[Z_t]\right)\left(\max_{i\in V} \mathcal{L}^i_{\theta,\theta^*}(X^i_{t} ) - \min_{i\in V} \mathcal{L}^i_{\theta,\theta^*}(X^i_{t} )\right),
	\end{align*}
	where the function $\pi(A)\in [0,1]$ is defined as
	\begin{align*}
	\pi(A) & = 1 - \min_{i,j \in V} \sum_{l=1}^{n} \min\{[A]_{ik},[A]_{jk}\} \\
	& \leq 1 - \max_{i\in V} \min_{i \in V} [A]_{ij},
	\end{align*}  
	and is convex and sub-multiplicative, see Lemma~$A.2$ in~\cite{molavi2018theory}.
	
	Under the assumption that the variables $X^i_k$ are finite for all $i \in V$ and $k \geq 0$, it follows that there exists a constant $c\geq 0$, independent of $k$, such that    
	\begin{align}\label{eq:proof_summ}
	\max_{i\in V} \varphi^i_k(\theta,\theta^*) - \min_{i\in V} \varphi^i_k(\theta,\theta^*) 
	& \leq c\sum_{t=1}^{k}\mathbb{E}\left[\pi\left(Z_t\right)\right].
	\end{align} 
	
	Next, without loss of generality we assume the value $k$ is such that we can write it as \mbox{$k = (s+1)nB-1$} for some $s\geq 0$. This will allow us to group the summation on the right side of~\eqref{eq:proof_summ} into some initial finite sum and sets of size $nB$ as follows     
	\begin{align*}
	&\max_{i\in V} \varphi^i_k(\theta,\theta^*) - \min_{i\in V} \varphi^i_k(\theta,\theta^*)  \\
	& \qquad \leq c\sum_{t=1}^{s}\sum_{i=tnB}^{(t+1)nB-1}\mathbb{E}\left[\pi\left(Z_i\right)\right] + c\sum_{t=1}^{nB-1}\mathbb{E}\left[\pi\left(Z_t\right)\right].
	\end{align*}
	
	As next step, we use Lemma~\ref{lemma:angelia} to bound the entries of $Z_i$ and count how many events $\{\beta_k =1\}$ occur. Particularly, we know that if $\{\beta_k =1\}$ occurs $nB$ times, the smallest entry of $Z_i$ will be lower bounded by $\eta^{nB}$. Thus
	\begin{align*}
	&\max_{i\in V} \varphi^i_k(\theta,\theta^*) - \min_{i\in V} \varphi^i_k(\theta,\theta^*)  \\
	& \leq c\sum_{t=1}^{s}\mathbb{E}\left[\left(1 - \eta^{nB}\right)^{\lfloor \frac{ \beta_t + \dots + \beta_s  }{nB}\rfloor}  \right] + c\sum_{t=1}^{nB-1}\mathbb{E}\left[\pi\left(Z_t\right)\right]\\
	& \leq c\sum_{t=1}^{s}\mathbb{E}\left[2\left(\left(1 - \eta^{nB}\right)^{\frac{1}{nB}}\right)^{ \beta_t + \dots + \beta_s }\right] + c\sum_{t=1}^{nB-1}\mathbb{E}\left[\pi\left(Z_t\right)\right]\\
	& \leq 2c\sum_{t=1}^{s}\mathbb{E}\left[\sigma^{\beta_t + \dots + \beta_s }\right] + c\sum_{t=1}^{nB-1}\mathbb{E}\left[\pi\left(Z_t\right)\right],
	\end{align*}
	where we have defined $\sigma = \left(1 - \eta^{nB}\right)^{\frac{1}{nB}}$.
	
	Using the fact that the random variables $\{\beta_k\}$ are independent, we have that    
	\begin{align*}
	&\max_{i\in V} \varphi^i_k(\theta,\theta^*) - \min_{i\in V} \varphi^i_k(\theta,\theta^*)  \\
	& \leq 2c\sum_{t=1}^{s} \prod_{r=t}^s \left(1 - (1-\sigma) P(\beta_r = 1) \right) + c\sum_{t=1}^{nB-1}\mathbb{E}\left[\pi\left(Z_t\right)\right]\\
	& \leq 2c\sum_{t=1}^{s} \prod_{r=s}^s \left(1 - (1-\sigma)\prod_{i=rB}^{(r+1)B-1}\lambda_i \right) + c\sum_{t=1}^{nB-1}\mathbb{E}\left[\pi\left(Z_t\right)\right].
	\end{align*}
	
	Now, define $\alpha_s = \min_{1\leq r \leq s } \prod_{i=rB}^{(r+1)B-1}\lambda_i$, then
	\begin{align}\label{eq:proof_end}
	&\max_{i\in V} \varphi^i_k(\theta,\theta^*) - \min_{i\in V} \varphi^i_k(\theta,\theta^*)  \nonumber\\
	& \leq 2c\sum_{t=1}^{s} \left(1 - (1-\sigma) \alpha_s \right)^{s-t} + c\sum_{t=1}^{nB-1}\mathbb{E}\left[\pi\left(Z_t\right)\right]  \nonumber\\
	& \leq 2c\frac{1 - \left(1-(1-\sigma) \alpha_s \right)^s}{(1 - \sigma)\alpha_s }  + c\sum_{t=1}^{nB-1}\mathbb{E}\left[\pi\left(Z_t\right)\right],
	\end{align}
	
	Finally, divide both sides of~\eqref{eq:proof_end} by $k$ and take $\limsup$ as $k \to \infty$, then
	\begin{align}\label{eq:proof_final}
	&\limsup_{k \to \infty} \frac{1}{k} \left(\max_{i\in V} \varphi^i_k(\theta,\theta^*) - \min_{i\in V} \varphi^i_k(\theta,\theta^*) \right)  \nonumber \\ 
	& \leq \limsup_{k \to \infty} \frac{1}{k} 2c\frac{1 - \left(1-(1-\sigma) \alpha_k \right)^k}{(1 - \sigma)\alpha_k }  \nonumber \\
	& \leq 0,
	\end{align}
	where the last line follows from~\eqref{eq:inf_often} and the fact that the last term on the right of~\eqref{eq:proof_end} is finite. Thus,
	\begin{align*}
	\lim_{k \to \infty } \frac{1}{k}\left(\varphi^i_k(\theta,\theta^*) - \varphi^j_k(\theta,\theta^*)\right) = 0 \qquad a.s.
	\end{align*}
	
	The desired result will follow from~\eqref{eq:asymptotic_rate} and~\eqref{eq:proof_final} since they imply that $\limsup_{k \to \infty} \frac{1}{k} \varphi^i_{k}(\theta,\theta^*) <0$ almost surely for all agents. Therefore, $\lim_{k \to \infty } \varphi^i_{k+1}(\theta,\theta^*) = -\infty$, which subsequently imply that $\lim_{k \to \infty } \mu^i_k(\theta) = 0$ for all $\theta \notin \Theta^*$ with probability $1$.  
\end{proof}

\section{A CONVERSE RESULT FOR THEOREM~\ref{thm:main}}\label{sec:converse}

In this section, we state an additional result regarding the ergodicity of the backward product of the matrices from the sequence $\{T_k\}$. Particularly, Lemma~\ref{lemma:inf_often} shows that if~\eqref{eq:inf_often} holds then~$\{T_k\}$ is ergodic. This indicates that if the weight an agent assigns to its neighbors decays sufficiently slow, the infinite backward product of its weight matrices is equivalent to an infinite product of a sequence of matrices whose positive entries are lower bounded. Now, we explore the case when~\eqref{eq:inf_often} does not hold.  

\begin{lemma}\label{lemma:converse}
	Suppose Assumption~\ref{assum:bounds_on_a} holds for a $B$-strongly connected sequence of graphs $\{\mathcal{G}_k\}$. If
	\begin{align}\label{eq:converse}
	\lim_{k \to \infty } k \prod_{i=kB}^{(k+1)B-1} \lambda_i & < \infty. 
	\end{align} 
	Then, $P(\{\beta_k =1 \ \text{infinitely often} \}) =0$.
\end{lemma}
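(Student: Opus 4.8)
The plan is to reduce the claim to the Borel--Cantelli lemma, closely paralleling the argument behind Lemma~\ref{lemma:inf_often}. Recall from that proof that $\{\beta_k = 1\}$ is exactly the event $\{\Lambda_i = 1 \text{ for all } i \in \{kB,\dots,(k+1)B-1\}\}$ and that the $\{\Lambda_i\}$ are independent Bernoulli variables with $P(\Lambda_i = 1) = \lambda_i$, so that
\begin{align*}
P(\beta_k = 1) = \prod_{i=kB}^{(k+1)B-1}\lambda_i =: q_k .
\end{align*}
Moreover, the events $\{\beta_k = 1\}$, $k \geq 0$, are mutually independent, because each is measurable with respect to the block $\{\Lambda_{kB},\dots,\Lambda_{(k+1)B-1}\}$ and these blocks are pairwise disjoint. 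Hence, by the convergence half of the Borel--Cantelli lemma, it suffices to show $\sum_{k=1}^{\infty} q_k < \infty$ in order to conclude $P(\{\beta_k = 1 \text{ infinitely often}\}) = 0$.

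Thus the whole proof reduces to deducing $\sum_k q_k < \infty$ from \eqref{eq:converse}, and this is the step I expect to be the main obstacle. In Lemma~\ref{lemma:inf_often} the analogous direction was easy, since $k q_k \to \infty$ gives $q_k \geq 1/k$ for large $k$ and hence divergence by comparison with the harmonic series; the converse passage, from boundedness of $k q_k$ to genuine \emph{summability} of $q_k$, is more delicate, and one has to be careful about the precise sense in which \eqref{eq:converse} is invoked. My approach would be a comparison with a convergent $p$-series exploiting the block-product structure $q_k = \prod_{i=kB}^{(k+1)B-1}\lambda_i$: in the model case $\lambda_k \sim c\,k^{-r}$ discussed after Theorem~\ref{thm:main}, all $B$ factors of the $k$-th block have order $k^{-r}$, so $q_k$ has order $k^{-rB}$, and $\sum_k q_k < \infty$ holds exactly when $\lambda_k$ decays faster than the threshold rate $k^{-1/B}$, i.e.\ exactly when \eqref{eq:inf_often} fails. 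The part of this step that needs real care is isolating which decay hypothesis \eqref{eq:converse} is being used to rule out the borderline behaviour, since finiteness of $\lim_k k q_k$ alone does not in general dominate $\{q_k\}$ by a summable sequence.

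With $\sum_k q_k < \infty$ established, the Borel--Cantelli lemma yields $P(\{\beta_k = 1 \text{ i.o.}\}) = 0$, which is the assertion. I would close with the observation that, since the events $\{\beta_k = 1\}$ are independent, both halves of Borel--Cantelli apply; hence this lemma together with Lemma~\ref{lemma:inf_often} expresses a genuine zero--one dichotomy for $\{\beta_k = 1 \text{ i.o.}\}$, governed entirely by the convergence or divergence of $\sum_k \prod_{i=kB}^{(k+1)B-1}\lambda_i$.
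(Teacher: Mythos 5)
Your proposal follows exactly the paper's route: identify $P(\beta_k=1)=\prod_{i=kB}^{(k+1)B-1}\lambda_i=:q_k$ and apply the convergence half of the Borel--Cantelli lemma (which, as you note, does not even need the independence of the blocks, though independence does hold). The obstacle you isolate --- passing from \eqref{eq:converse} to $\sum_k q_k<\infty$ --- is not something you failed to see how to do; it is a genuine gap, and it is present in the paper's own proof, which silently substitutes the summability condition $\sum_k q_k<\infty$ for the stated hypothesis $\lim_k k\,q_k<\infty$. The implication is false: taking $q_k = 1/(k\log k)$ (achievable, e.g., with $\lambda_i \sim (i\log i)^{-1/B}$ on the $k$-th block) gives $k\,q_k = 1/\log k \to 0 <\infty$, so \eqref{eq:converse} holds, yet $\sum_k q_k = \infty$. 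Worse, by your own closing observation the events $\{\beta_k=1\}$ are independent, so the second Borel--Cantelli lemma then forces $P(\{\beta_k=1\}\ \text{i.o.})=1$, i.e.\ the conclusion of the lemma actually fails for this choice of $\{\lambda_k\}$. The statement is therefore not provable as written; it becomes correct (and your proof closes immediately) if the hypothesis is replaced by $\sum_{k=1}^{\infty}\prod_{i=kB}^{(k+1)B-1}\lambda_i<\infty$, which is the condition the paper's displayed equation inside the proof in fact uses. Your zero--one dichotomy remark is the cleanest way to state the corrected pair of results: $\{\beta_k=1\ \text{i.o.}\}$ has probability $1$ or $0$ according as $\sum_k q_k$ diverges or converges, and the limit conditions \eqref{eq:inf_often} and \eqref{eq:converse} are, respectively, a sufficient condition for the former and an insufficient proxy for the latter.
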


\begin{proof}
	The desired result follows from the same argument as the proof of Lemma~\ref{lemma:inf_often}. By the Borel-Cantelli lemma, if
	\begin{align}
	\sum_{k=1}^\infty  P(\beta_k=1) = \sum_{k=1}^\infty \prod_{i=kB}^{(k+1)B-1} \lambda_i & < \infty.
	\end{align}
	Then, $P(\{\beta_k=1\} \ \text{infinitely often} ) = 0$. Therefore, the infinite backward product of the sequence of stochastic matrices $\{T_k\}$ corresponds to the product of a finite number of matrices of the form $\prod_{i=kB}^{(k+1)B-1}A_i$, which is not sufficient for ergodicity.
\end{proof}

Lemma~\ref{lemma:converse} shows that if the sequence $\{\lambda_k\}$ decays to zero too fast, $\{T_k\}$ can be non-ergodic since it is only equivalent to a finite number of products of matrices with lower bounded positive entries, which might not result in a rank one matrix for general graphs.

The next corollary presents a consequence of Lemma~\ref{lemma:converse} in the context of non-Bayesian social learning.

\begin{corollary}\label{thm:weak_coro}
	Let Assumptions~\ref{assum:bounds_on_a} and \ref{assum:no_conflict} hold. For a group of agents following the update rule~\eqref{eq:update}. If \eqref{eq:converse} holds. Then, there exists a sufficiently large graph such that a subset of agents remains uncertain about the state of the world almost surely.
\end{corollary}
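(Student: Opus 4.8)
The plan is to turn the obstruction of Lemma~\ref{lemma:converse} into an explicit instance on which learning fails. Under \eqref{eq:converse}, Lemma~\ref{lemma:converse} gives that the event $\{\beta_k=1\}$ of the coupling in the proof of Lemma~\ref{lemma:inf_often} occurs only finitely often almost surely; since $T_k=\mathbb{E}[(1-\Lambda_k)I+\Lambda_kA_k]$ and $\{\beta_k=1\}$ is precisely the ``complete window'' that Lemma~\ref{lemma:angelia} needs in order to contract backward products, this means that after some almost surely finite random time there is no further genuine mixing across a suitably ``tight'' cut of the graph. I will build a $B$-strongly connected graph with such a cut, so that one side of the network is eventually starved of the information held by the other side, and then run the belief recursion of Theorem~\ref{thm:main} on it.

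The graph: fix $B$ as in \eqref{eq:converse} and take $n\ge B+2$ agents partitioned as $V=V_1\cup\{v_1,\dots,v_{B-1}\}\cup V_2$, where $V_1$ and $V_2$ are nonempty with $|V_2|\ge 2$ and each induces a complete graph present at every time step, while the relays $v_1,\dots,v_{B-1}$ form a directed channel $v_0\!\to\!v_1\!\to\!\cdots\!\to\!v_B$ (with $v_0\in V_1$, $v_B\in V_2$, together with the reverse edges), the links incident to $v_r$ being present exactly at the times $t$ with $t\equiv r\pmod{B}$. Then the block union $\mathcal{E}_k^B$ is strongly connected for every $k$, while no window of fewer than $B$ consecutive steps has a strongly connected union, so Assumption~\ref{assum:bounds_on_a}(a) holds with this $B$; any weights compatible with $\{\lambda_k\}$ (taking $\underline a$ small enough relative to $n$) meet Assumption~\ref{assum:bounds_on_a}(b). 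The observations: pick $\theta'\neq\theta^*$ and local families with $\Theta^*_i=\{\theta',\theta^*\}$ and $P^i_{\theta'}\neq P^i_{\theta^*}$ for every $i\notin V_1$, and $\Theta^*_i=\{\theta^*\}$ for every $i\in V_1$. Then $\theta^*\in\bigcap_i\Theta^*_i$, $\Theta^*=\{\theta^*\}\not\ni\theta'$, so Assumption~\ref{assum:no_conflict} and the requirement $\Theta^*\neq\Theta$ hold, and $\theta'$ can be ruled out only through signals originating in $V_1$.

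With $\varphi_k(\theta',\hat\theta)=\sum_{t=1}^{k-1}T_{k-1:t}\mathcal{L}_{\theta',\hat\theta}(X_t)+\mathcal{L}_{\theta',\hat\theta}(X_k)$ as in the proof of Theorem~\ref{thm:main}, fix $j^*\in V_2\setminus\{v_B\}$ and set $w^i_k=\sum_{t=1}^{k-1}[T_{k-1:t}]_{j^*i}$. The crucial claim is $\sup_k w^i_k<\infty$ for every $i\in V_1\cup\{v_1,\dots,v_{B-1}\}$: once complete windows have ceased, the information that ever crosses the length-$B$ channel is finite, so its total influence on $j^*$ is bounded. Granting this, in $\varphi^{j^*}_k(\theta',\hat\theta)$ the entire contribution of $V_1$ and of the channel is almost surely bounded, whereas the $V_2$-originating terms carry all but a bounded part of the total weight $k-1$ and have conditional mean $\sum_{i\in V_2}w^i_k\big(D_{KL}(P^i\|P^i_{\hat\theta})-D_{KL}(P^i\|P^i_{\theta'})\big)\ge 0$. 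Hence $\varphi^{j^*}_k(\theta',\theta^*)$ is, up to an almost surely bounded term, a mean-zero process whose variance diverges in $k$ (the agents $i\in V_2$ are indifferent between $\theta'$ and $\theta^*$, and $P^i_{\theta'}\neq P^i_{\theta^*}$), so $\limsup_k\varphi^{j^*}_k(\theta',\theta^*)=+\infty$; while for every other $\hat\theta$ one gets $\varphi^{j^*}_k(\hat\theta,\theta^*)\to-\infty$ by the strong law of large numbers, as used for \eqref{eq:asymptotic_rate}. Reassembling $\mu^{j^*}_k$ from these ratios then gives $\mu^{j^*}_k(\theta')\not\to 0$ (indeed $\limsup_k\mu^{j^*}_k(\theta')=1$) for every $j^*\in V_2\setminus\{v_B\}$, which is the asserted persistent uncertainty of a subset of the agents.

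The main obstacle is the claim $\sup_k w^i_k<\infty$: it is the genuine converse of the contraction estimate underlying Theorem~\ref{thm:main}, whereas the Borel--Cantelli argument of Lemma~\ref{lemma:converse} only controls the random coupling $Z_k$. I would establish it by expanding the $\big(V_2\setminus\{v_B\}\big)\times V_1$ block of the deterministic products $T_{k:t}$ over time-respecting paths through the channel and summing the accompanying products of $\lambda$-factors --- the same geometric-series bookkeeping as in \eqref{eq:proof_end}--\eqref{eq:proof_final}, but reading \eqref{eq:converse} in the direction that makes those sums converge, as in the proof of Lemma~\ref{lemma:converse}. The delicate part is that the strong self-loops $[T_k]_{ii}\to 1$ let a relay chain be traversed ``in installments'' spread over many windows, so one must show those installments contribute only a summable amount; this is exactly where the residue-class schedule of the channel links --- which ties any traversal completed within one window to the event $\{\beta_k=1\}$ --- and the room in the graph for the $B-1$ relay agents enter.
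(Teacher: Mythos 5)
There is a genuine gap, and it sits exactly where you flagged it: the claim $\sup_k w^i_k<\infty$. Condition~\eqref{eq:converse} constrains only the \emph{windowed product} $\prod_{i=kB}^{(k+1)B-1}\lambda_i$, not the individual $\lambda_t$; it is therefore compatible with $\lambda_t$ being bounded away from zero on all but one residue class modulo $B$. For instance, with $B=3$, $\lambda_t=1/2$ for $t\not\equiv 0\pmod 3$ and $\lambda_t=1/(t+1)$ for $t\equiv 0\pmod 3$ satisfies~\eqref{eq:converse}. In your construction the hop into $v_r$ is scheduled at times $t\equiv r\pmod B$, i.e.\ at residues $1,\dots,B-1$ --- exactly the residues where, in this example, the weights do \emph{not} decay. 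Each channel hop then transmits a fraction of mass bounded below by $\underline a/2$ at every occurrence, the cumulative influence $\sum_{t\le k}\sum_{i\in V_1}[T_{k-1:t}]_{j^*i}$ grows like $\Theta(k)$, the drift of $\varphi^{j^*}_k(\theta',\theta^*)$ is $-\Theta(k)$ against $O(\sqrt{k\log\log k})$ fluctuations, and the $V_2$ agents do learn. Even if you reschedule all channel edges onto the single decaying residue, a lone edge of weight $\Theta(1/t)$ appearing at times $\Theta(t)$ apart still transmits total mass $\sum_t 1/t=\infty$, so $w^i_k$ is at best $\Theta(\log k)$, never bounded; the step ``the entire contribution of $V_1$ and of the channel is almost surely bounded'' then collapses, and you are left having to weigh an unbounded negative drift against the fluctuation of the $V_2$ innovations --- a comparison that~\eqref{eq:converse} alone does not decide. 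The root cause is the one you named yourself: the self-loops let the channel be traversed in installments, and the installments are governed by sums of individual $\lambda_t$'s (which~\eqref{eq:converse} does not make summable), not by the windowed products that Borel--Cantelli controls.

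The paper's own proof takes a different and much shorter route: it invokes Lemma~\ref{lemma:converse} to conclude that, in the Bernoulli coupling of Lemma~\ref{lemma:inf_often}, the complete-window events $\{\beta_k=1\}$ occur only finitely often almost surely, calls $N$ that finite number, and places $2N$ agents on a path graph with a single informative endpoint, arguing that the backward product then retains zero entries between the informative agent and the far half of the path. That argument sidesteps your deterministic influence accounting entirely (at the price of being itself terse about exactly the partial-window mixing you are worried about). If you want to pursue your route, you need either an additional hypothesis on $\{\lambda_k\}$ (e.g.\ $\sum_k\lambda_k<\infty$, under which your bounded-influence claim does hold) or a genuinely finer drift-versus-fluctuation comparison; as written, the construction does not prove the corollary.
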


\begin{proof}
	If~\eqref{eq:converse} holds we know that the infinite product $\{T_k\}$ corresponds to a finite product of matrices with lower bounded entries. Define the number of finite products as $N$. Assume there are $2N$ agents, and they are connected over a path graph. Moreover, only one of the agents at the end of the path has informative signals. Then, 
	$    \lim_{k=\infty} \prod_{t=s}^k T_t     $, will have zero entries for some points, i.e., not enough mixing happens. Furthermore, a subset of agents will not learn.
\end{proof}

\section{SOCIAL LEARNING WITH CONFLICTING HYPOTHESES}\label{sec:conflict}

Assumption~\ref{assum:no_conflict} is central for the results presented in Theorem~\ref{thm:main}. In this section we explore the consistency of the log-linear learning rule~\eqref{eq:update} when Assumption~\ref{assum:no_conflict} does not hold. Particularly, the fact that the optimal set $\Theta^*$ is contained in the optimal set of the individual local functions, guarantees that the distance between any hypothesis $\theta \notin \Theta^*$ to the true distribution of the observations $P^i$ is larger than the distance (note that the KL divergence is only a premetric) between the hypotheses $\theta^*$ and $P^*$, i.e.,
\begin{align}\label{eq:no_conflict}
D_{KL}(P^i\|P^i_\theta) > D_{KL}(P^i\|P^i_{\theta^*}),
\end{align}
which in turn makes~\eqref{eq:asymptotic_rate} hold. 

If Assumption~\ref{assum:no_conflict} does not hold, one cannot guarantee~\eqref{eq:no_conflict} holds, because in general we only have $\sum_{i=1}^{n} D_{KL}(P^i\|P^i_\theta) \geq \sum_{i=1}^{n} D_{KL}(P^i\|P^i_{\theta^*})$, and some of the terms in the sum might be negative. Moreover, each term will be multiplied (weighted) by the corresponding entry of the vector $\phi_t$ and thus~\eqref{eq:asymptotic_rate} will depend on the specific sequence of graphs used.

One way to avoid this issue is to assume that every matrix in the sequence $\{T_k\}$ is doubly stochastic, which guarantees, by Lemma~\ref{lemma:angelia2}, that $\phi_t = 1/n$, making  ~\eqref{eq:asymptotic_rate} hold. However, in general, this approach has two main issues. First, if the graph is assumed directed, the agents might not be able to compute a set of doubly stochastic weights distributedly. Moreover, not every directed graph allows a doubly stochastic set of weights~\cite{gharesifard2010does}.

In~\cite{ned15b}, the authors proposed a modified log-linear update~\eqref{eq:update_conflict}, and showed that it guarantees all nodes in the network will correctly learn the solution of~\eqref{eq:main_problem} even in the present of conflicting hypotheses.
\begin{subequations}\label{eq:update_conflict}
	\begin{align}
	y^i_{k+1} & =\sum\limits_{j \in N_k^i }[T_k]_{ij}y_k^j \\
	\mu_{k+1}^i\left(\theta\right) & = \frac{1}{Z_{k+1}^i}\left(\prod\limits_{j =1 }^n\mu_{k}^j\left(\theta\right)^{[T_k]_{ij}y_k^j}p^i_\theta(x^i_{k+1})\right)^{\frac{1}{y_{k+1}^i}}
	\end{align}
\end{subequations}
where $d_k^i$ is the out degree of node $i$ at time $k$, and $Z^i_{k+1}$ is the corresponding normalization factor.

The fundamental result in Lemma~\ref{lemma:inf_often} extends to the update rule~\eqref{eq:update_conflict}, which directly allow us to state the following result.

\begin{theorem}\label{thm:conflict}
	Let Assumption~\ref{assum:bounds_on_a}(b) hold, and for each $k$, assume there exists a weight matrix $T_k$ that is \textit{column}-stochastic and compliant with the underlying graph topology, i.e., $[T_{k}]_{ij} > 0$ if  $(j,i) \in  \mathcal{E}_k$. If
	\begin{align*}
	\lim_{k \to \infty } k \prod_{i=kB}^{(k+1)B-1} \lambda_i & = \infty. 
	\end{align*} 
	Then, the update rule~\eqref{eq:update_conflict}, with $y_0^i=1$, has the following property:
	\begin{align*}
	\lim_{k \to \infty } \mu^i_k(\theta) = 0 \qquad a.s. \qquad \forall\theta \notin \Theta^*, i \in V.
	\end{align*}
\end{theorem}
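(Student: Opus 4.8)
The plan is to reduce~\eqref{eq:update_conflict} to a linear recursion driven by the log-likelihood ratios, exactly as in the proof of Theorem~\ref{thm:main}, but with the push-sum variable $y^i_k$ undoing the column-stochastic imbalance. Keep the notation $\varphi^i_k(\theta,\theta^*)=\log(\mu^i_k(\theta)/\mu^i_k(\theta^*))$ and $\mathcal L^i_{\theta,\theta^*}(X^i_k)=\log(p^i_\theta(X^i_k)/p^i_{\theta^*}(X^i_k))$, and set $\psi^i_k:=y^i_k\,\varphi^i_k(\theta,\theta^*)$. Taking logarithms in~\eqref{eq:update_conflict} and forming the ratio of the $\theta$- and $\theta^*$-components cancels the normalization $Z^i_{k+1}$, and combined with $y^i_{k+1}=\sum_j[T_k]_{ij}y^j_k$ this yields (with uniform initial beliefs, so $\varphi^i_0=0$, and $y_0=\boldsymbol 1$, hence $\psi_0=\boldsymbol 0$, WLOG as in Theorem~\ref{thm:main})
\begin{align*}
\psi_{k+1}=T_k\psi_k+\mathcal L_{\theta,\theta^*}(X_{k+1}),\qquad \psi_0=\boldsymbol 0,
\end{align*}
so that $\psi_k=\sum_{t=1}^{k-1}T_{k-1:t}\,\mathcal L_{\theta,\theta^*}(X_t)+\mathcal L_{\theta,\theta^*}(X_k)$ and $y_k=T_{k-1:0}\boldsymbol 1$; since every $T_k$ is column-stochastic, $\boldsymbol 1^T y_k=\boldsymbol 1^T y_0=n$ for all $k$.

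Next I would invoke the column-stochastic version of Lemma~\ref{lemma:inf_often}. Writing $T_k=(1-\lambda_k)I+\lambda_k A_k$ with $\{A_k\}$ column-stochastic whose nonzero entries are bounded below by $\eta$, the same Bernoulli coupling $Z_k=\prod_{t=s}^{k}A_t^{\Lambda_t}$, the window indicator $\beta_k$ of~\eqref{eq:betas}, and the Borel--Cantelli argument under~\eqref{eq:inf_often} show that $\{\beta_k=1\}$ occurs infinitely often a.s., hence $T_{k:s}=\mathbb E[Z_k]\to\phi\,\boldsymbol 1^T$ as $k\to\infty$ for a single stochastic vector $\phi$ independent of $s$ (this is the column-stochastic notion of ergodicity --- the rows become constant --- and requires the push-sum analogue of Lemma~\ref{lemma:angelia}, since transposing a column-stochastic backward product gives a \emph{forward} row-stochastic product not directly covered by Lemma~\ref{lemma:angelia}). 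The push-sum analogue of Lemma~\ref{lemma:angelia2} then gives $\phi^i\ge\delta/n>0$, hence $y^i_k=[T_{k-1:0}\boldsymbol 1]_i\to n\phi^i\ge\delta>0$, so the denominators in $\varphi^i_k=\psi^i_k/y^i_k$ are eventually bounded away from zero. Finally, the quantitative $\beta$-counting used in~\eqref{eq:proof_summ}--\eqref{eq:proof_final} gives, on the $\tfrac1k$ scale, $\tfrac1k\sum_{t=1}^{k}\big|[T_{k-1:t}]_{ij}-\phi^i\big|\to0$ a.s.\ under~\eqref{eq:inf_often}.

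With these ingredients I would show $\limsup_{k\to\infty}\tfrac1k\psi^i_k<0$ for \emph{every} $i\in V$. Add and subtract $\mathbb E[\mathcal L^j_{\theta,\theta^*}(X^j_t)]$ inside $\tfrac1k\psi^i_k=\tfrac1k\sum_t\sum_j[T_{k-1:t}]_{ij}\mathcal L^j_{\theta,\theta^*}(X^j_t)$ and split $[T_{k-1:t}]_{ij}=\phi^i+\varepsilon_{k,t,i,j}$. The centered part with the $\phi^i$ factor is $\phi^i\cdot\tfrac1k\sum_{t=1}^k\sum_j\big(\mathcal L^j_{\theta,\theta^*}(X^j_t)-\mathbb E[\mathcal L^j_{\theta,\theta^*}(X^j_t)]\big)\to0$ a.s.\ by the strong law of large numbers (the $X^j_t$ are independent in $t$ and $j$ and take finitely many values), and the remaining terms are bounded in absolute value by a constant times $\sum_j\tfrac1k\sum_t\big|[T_{k-1:t}]_{ij}-\phi^i\big|\to0$. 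Since the observations are i.i.d.\ in time, $\mathbb E[\mathcal L^j_{\theta,\theta^*}(X^j_t)]=D_{KL}(P^j\|P^j_{\theta^*})-D_{KL}(P^j\|P^j_\theta)=:v^j$ for all $t$, and the same split gives $\tfrac1k\sum_{t=1}^k[T_{k-1:t}v]_i\to\phi^i\sum_j v^j$; dividing by $y^i_k\to n\phi^i$ cancels the weight $\phi^i$ and yields, for every $i$,
\begin{align*}
\limsup_{k\to\infty}\tfrac1k\,\varphi^i_k(\theta,\theta^*)=\tfrac1n\sum_{j=1}^{n}\Big(D_{KL}(P^j\|P^j_{\theta^*})-D_{KL}(P^j\|P^j_\theta)\Big)<0,
\end{align*}
where strict negativity holds because $\theta\notin\Theta^*$ and $\Theta^*$ minimizes $\sum_j D_{KL}(P^j\|P^j_\theta)$. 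Hence $\varphi^i_k(\theta,\theta^*)\to-\infty$ a.s.\ and $\mu^i_k(\theta)=\mu^i_k(\theta^*)e^{\varphi^i_k(\theta,\theta^*)}\to0$ a.s.\ for every $i\in V$ and every $\theta\notin\Theta^*$.

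The main obstacle is the ergodicity step of the second paragraph: column-stochastic backward products are not handled by Lemma~\ref{lemma:angelia} (their transpose is a forward row-stochastic product, which need not be ergodic), so one must borrow from push-sum theory both the uniform lower bound on the $y$-iterates and the geometric mixing of $T_{k:s}$ towards $\phi\boldsymbol 1^T$ with $\phi$ independent of $s$, and then thread the sparse-mixing/$\beta$-counting bookkeeping of Theorem~\ref{thm:main} through it. The payoff of doing this correctly is precisely the conflicting-hypotheses case: the $\phi^i$ weights cancel against $y^i_k$, so only the \emph{sum} $\sum_j D_{KL}(P^j\|P^j_\theta)$ needs its minimum at $\theta^*$, and no doubly-stochastic assumption is required. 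A secondary technicality is that the weights $[T_{k-1:t}]_{ij}$ in the law-of-large-numbers step depend on the horizon $k$, which is why the argument is organized around the $\phi^i+\varepsilon$ split rather than a direct appeal to a fixed absolute probability sequence.
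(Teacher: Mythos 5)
The paper does not actually print a proof of Theorem~\ref{thm:conflict}; it only states that the argument mirrors Theorem~\ref{thm:main} combined with the push-sum construction of~\cite{ned15b}. Your proposal is exactly that argument: the substitution $\psi^i_k=y^i_k\varphi^i_k$ linearizes~\eqref{eq:update_conflict} into $\psi_{k+1}=T_k\psi_k+\mathcal L_{\theta,\theta^*}(X_{k+1})$ with $y_k=T_{k-1:0}\boldsymbol 1$, the Bernoulli/Borel--Cantelli machinery of Lemma~\ref{lemma:inf_often} is rerun for column-stochastic $T_k$, and the division by $y^i_k$ cancels the coalescence weights so that only $\sum_j D_{KL}(P^j\|P^j_\theta)$ needs to be minimized at $\theta^*$, which is precisely why Assumption~\ref{assum:no_conflict} can be dropped. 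So the route is the intended one, and the reduction and the final limit computation are correct.

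Two caveats on the middle step, which you rightly identify as the crux but state slightly too strongly. First, for backward products of column-stochastic matrices the columns coalesce to a vector $\phi_k$ indexed by the \emph{terminal} time $k$ (left-multiplication by $T_{k+1}$ maps $\phi_k\boldsymbol 1^T$ to $(T_{k+1}\phi_k)\boldsymbol 1^T$), so $T_{k:s}$ need not converge to a single $\phi\boldsymbol 1^T$ as $k\to\infty$; your cancellation still goes through because the same $\phi_k^i$ appears in $y^i_k\approx n\phi^i_{k-1}$ and in $\tfrac1k\sum_t[T_{k-1:t}v]_i\approx\phi^i_{k-1}\sum_j v^j$, but the statement should be phrased with the $k$-dependent vector. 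Second, the uniform lower bound $\phi^i_k\ge\delta/n$ (equivalently $\inf_k y^i_k>0$) is the one ingredient that is not a routine transcription: the push-sum analogue of Lemma~\ref{lemma:angelia2} assumes uniformly lower-bounded positive entries, whereas here the off-diagonal entries of $T_k$ decay with $\lambda_k$, so the bound must be re-derived through the $\beta$-counting (e.g., by bounding $y^i_k$ via the random product $Z_{k-1}\boldsymbol 1$ conditioned on enough complete $\beta$-windows having occurred). You name this as the main obstacle but do not supply the argument; since the paper itself defers this entire proof to~\cite{ned15b}, that omission is consistent with the paper's level of detail, but it is the step a complete write-up would have to fill in.
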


The proof of Theorem~\ref{thm:conflict} follows similar arguments as in the proof of Theorem~\ref{thm:main}, see also~\cite{ned15b}. We omit the proof due to space constraints.


\section{NUMERICAL ANALYSIS}\label{sec:numerical}

In this section, we present simulation results for the non-Bayesian social learning model under different variations of graph connectivity. We assume there is a group of $10$ agents, from which only one of them, agent $1$, receives informative signals from a random variable $X^1_k \sim \text{Bernoulli}(0.7)$. All agents have a parametrized family of distributions $\mathcal{P}_{\Theta} = \{\theta_1, \theta_2\}$, where $P^i_{\theta_1} = \text{Bernoulli}(0.2)$ and $P^i_{\theta_2} = \text{Bernoulli}(0.8)$, thus, $\Theta^* = \theta_2$. For the network model we assume that $\mathcal{G}_k$ is a path graph if $\text{mod}(k,3) =0$, and $\mathcal{G}_k$ is the completely disconnected graph otherwise, see Figure~\ref{fig:graph}. Therefore, every $3$ time steps, the graph is a path graph and otherwise, the nodes remain disconnected. We will simulate three different scenarios $\lambda_k = 0.5$, $\lambda_k = 1/k$, and $\lambda_k = 1/k^{1/3}$.

\begin{figure}
	\centering
	\begin{overpic}[width=0.4\textwidth]{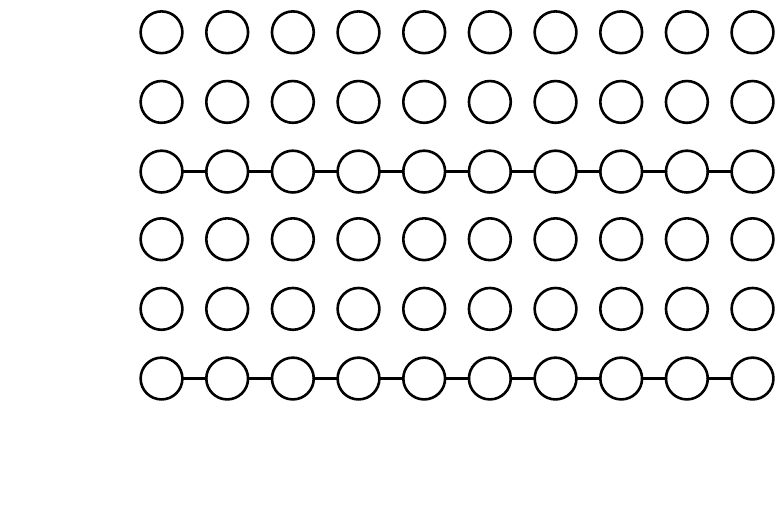}
		\put(0,61){{\small $k=1$}}
		\put(0,52){{\small $k=2$}}
				\put(0,43){{\small $k=3$}}
		\put(0,34){{\small $k=4$}}
				\put(0,25){{\small $k=5$}}
		\put(0,16){{\small $k=6$}}
		\put(5,3){{\small $\vdots$}}
		\put(60,3){{\small $\vdots$}}
	\end{overpic} 
	\caption[]{A graph sequence that is a path every $3$ iterations.}
	\label{fig:graph}
\end{figure}

Figure~\ref{fig:s2} shows the effect of the rate at which $\lambda_k$ decays to zero. For a subset of $5$ agents, we show their beliefs on both hypotheses $\theta_1$ and $\theta_2$. Agent $1$, which is the only one with informative signals is plotted with blue color. Agent $10$ is plotted with red color. If $\{\lambda_k=0.5\}$, there is no decay and the existing results in non-Bayesian learning guarantee that the learning rate will be geometric. This can be seen since all nodes in the network concentrate their beliefs on $\theta_2$, the optimal point, at around $100$ iterations. When $\lambda_k = 1/k^{1/3}$, our results still guarantee convergence. Even though the convergence rate is slower, all nodes learn the correct state in around $1000$ iterations. Finally, when $\lambda_k = 1/k$, the sequence $\{T_k\}$ is not ergodic. Particularly, one can see that even after $1\cdot 10^7$ iterations, the red node still has not learned the state of the world.

\begin{figure}[tp]
	\centering
		\begin{tikzpicture}
		\begin{axis}[
		width=4.4cm,height=3.5cm,scale=1,
		x label style={at={(axis description cs:0.5,0.05)},anchor=north},
		y label style={at={(axis description cs:0.15,.5)},anchor=south},
		title = {$\mu^i_k(\theta_1)$},
		ylabel={$\lambda_k = 0.5$},
		ticklabel style = {font=\tiny},
		xmode = log,
		ymin = 0, ymax=1,
		xmin = 1e0, xmax=100, 
		every axis plot/.append style={line width=1pt}],
		legend pos=south west;
		\addplot [blue]         table [x index=20,y index=0]{no_decay.dat};
		\addplot [green]        table [x index=20,y index=2]{no_decay.dat};
		\addplot [black]    	 table [x index=20,y index=4]{no_decay.dat};
		\addplot [yellow]        table [x index=20,y index=6]{no_decay.dat};
		\addplot [red]         table [x index=20,y index=8]{no_decay.dat};
		\end{axis}
		\end{tikzpicture}
		\begin{tikzpicture}
		\begin{axis}[
		width=4.4cm,height=3.5cm,scale=1,
		x label style={at={(axis description cs:0.5,0.05)},anchor=north},
		y label style={at={(axis description cs:0.15,.5)},anchor=south},
			title = {$\mu^i_k(\theta_2)$},
		ticklabel style = {font=\tiny},
		xmode = log,
		ymin = 0, ymax=1,
		xmin = 1e0, xmax=100, 
		every axis plot/.append style={line width=1pt}],
		legend pos=south west;
		\addplot [blue]         table [x index=20,y index=10]{no_decay.dat};
\addplot [green]        table [x index=20,y index=12]{no_decay.dat};
\addplot [black]    	 table [x index=20,y index=14]{no_decay.dat};
\addplot [yellow]        table [x index=20,y index=16]{no_decay.dat};
\addplot [red]         table [x index=20,y index=18]{no_decay.dat};
		\end{axis}
		\end{tikzpicture}
	\begin{tikzpicture}
	\begin{axis}[
	width=4.4cm,height=3.5cm,scale=1,
	x label style={at={(axis description cs:0.5,0.05)},anchor=north},
	y label style={at={(axis description cs:0.15,.5)},anchor=south},
	ylabel={$\lambda_k = 1/ k^{1/3}$},
	ticklabel style = {font=\tiny},
	xmode = log,
	ymin = 0, ymax=1,
	xmin = 1e0, xmax=1e7, 
	every axis plot/.append style={line width=1pt}],
	legend pos=south west;
	\addplot [blue]         table [x index=20,y index=0]{decay_1k3.dat};
	\addplot [green]        table [x index=20,y index=2]{decay_1k3.dat};
	\addplot [black]    	 table [x index=20,y index=4]{decay_1k3.dat};
	\addplot [yellow]        table [x index=20,y index=6]{decay_1k3.dat};
	\addplot [red]         table [x index=20,y index=8]{decay_1k3.dat};
	\end{axis}
	\end{tikzpicture}
	\begin{tikzpicture}
	\begin{axis}[
	width=4.4cm,height=3.5cm,scale=1,
	x label style={at={(axis description cs:0.5,0.05)},anchor=north},
	y label style={at={(axis description cs:0.15,.5)},anchor=south},
	ticklabel style = {font=\tiny},
	xmode = log,
	ymin = 0, ymax=1,
	xmin = 1e0, xmax=1e7, 
	every axis plot/.append style={line width=1pt}],
	legend pos=south west;
	\addplot [blue]         table [x index=20,y index=10]{decay_1k3.dat};
	\addplot [green]        table [x index=20,y index=12]{decay_1k3.dat};
	\addplot [black]    	 table [x index=20,y index=14]{decay_1k3.dat};
	\addplot [yellow]        table [x index=20,y index=16]{decay_1k3.dat};
	\addplot [red]         table [x index=20,y index=18]{decay_1k3.dat};
	\end{axis}
	\end{tikzpicture}

	\begin{tikzpicture}
	\begin{axis}[
	width=4.4cm,height=3.5cm,scale=1,
	x label style={at={(axis description cs:0.5,0.05)},anchor=north},
	y label style={at={(axis description cs:0.15,.5)},anchor=south},
	        xlabel={Iterations},
			ylabel={$\lambda_k = 1/k$},
	ticklabel style = {font=\tiny},
	xmode = log,
	ymin = 0, ymax=1,
	xmin = 1e0, xmax=1e7, 
	every axis plot/.append style={line width=1pt}],
	legend pos=south west;
	\addplot [blue]         table [x index=20,y index=0]{decay_1k.dat};
	\addplot [green]        table [x index=20,y index=2]{decay_1k.dat};
	\addplot [black]    	 table [x index=20,y index=4]{decay_1k.dat};
	\addplot [yellow]        table [x index=20,y index=6]{decay_1k.dat};
	\addplot [red]         table [x index=20,y index=8]{decay_1k.dat};
	\end{axis}
	\end{tikzpicture}
	\begin{tikzpicture}
	\begin{axis}[
	width=4.4cm,height=3.5cm,scale=1,
	x label style={at={(axis description cs:0.5,0.05)},anchor=north},
	y label style={at={(axis description cs:0.15,.5)},anchor=south},
	        xlabel={Iterations},
	ticklabel style = {font=\tiny},
	xmode = log,
	ymin = 0, ymax=1,
	xmin = 1e0, xmax=1e7, 
	every axis plot/.append style={line width=1pt}],
	legend pos=south west;
	\addplot [blue]         table [x index=20,y index=10]{decay_1k.dat};
	\addplot [green]        table [x index=20,y index=12]{decay_1k.dat};
	\addplot [black]    	 table [x index=20,y index=14]{decay_1k.dat};
	\addplot [yellow]        table [x index=20,y index=16]{decay_1k.dat};
	\addplot [red]         table [x index=20,y index=18]{decay_1k.dat};
	\end{axis}
	\end{tikzpicture}
	\caption{The effect of the rate at which the weights decay.}
	\label{fig:s2}
\end{figure}
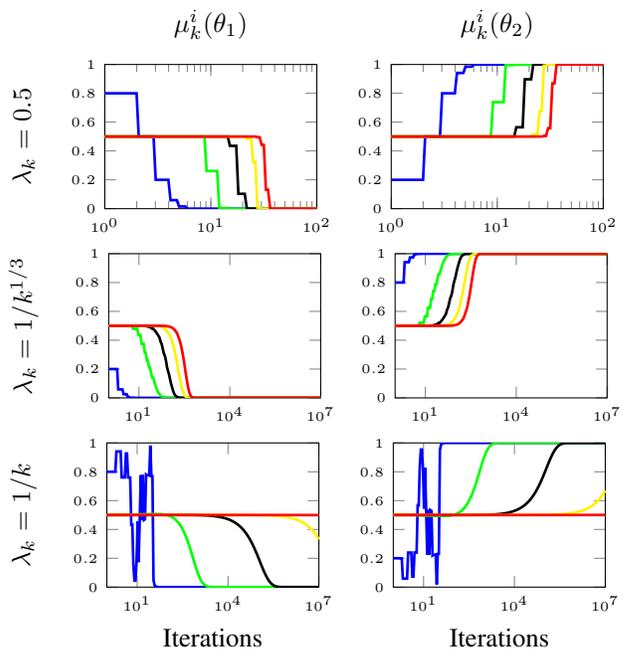

\section{CONCLUSIONS AND FUTURE WORK}\label{sec:conclusions}

We studied the problem of non-Bayesian learning for agents with increasing self-confidence. In our main result, we explicitly characterize the fastest rate at which an agent increases it self-confidence, or decreases the weights of its neighbors, and still guarantees that social learning occurs. Moreover, we do so for the learning problem where agents can have conflicting hypotheses. Finally, we show a converse of our main result, that states that if the rate at which the weight decreases is faster than our rate bound, there exist graphs for which no social learning occurs.

Two main questions remain open and require further work. First, what is the non-asymptotic convergence rate of beliefs generated by the log-linear update rule~\eqref{eq:update} when social learning occurs? How does this convergence rate depend on the convergence rate of the sequence $\{\lambda_k\}$? Second, if social learning does not occur, is it possible to estimate the distance between the infinite product of the matrices $\{T_k\}$ and a rank one matrix?

\bibliographystyle{ieeetr}
\bibliography{all_refs}

\begin{thebibliography}{10}

\bibitem{molavi2018theory}
P.~Molavi, A.~Tahbaz-Salehi, and A.~Jadbabaie, ``A theory of non-bayesian
  social learning,'' {\em Econometrica}, vol.~86, no.~2, pp.~445--490, 2018.

\bibitem{jad12}
A.~Jadbabaie, P.~Molavi, A.~Sandroni, and A.~Tahbaz-Salehi, ``{Non-Bayesian
  social learning},'' {\em Games and Economic Behavior}, vol.~76, no.~1,
  pp.~210--225, 2012.

\bibitem{deg74}
M.~H. DeGroot, ``Reaching a consensus,'' {\em Journal of the American
  Statistical Association}, vol.~69, no.~345, pp.~118--121, 1974.

\bibitem{gil93}
G.~L. Gilardoni and M.~K. Clayton, ``{On reaching a consensus using Degroot's
  iterative pooling},'' {\em The Annals of Statistics}, vol.~21, no.~1,
  pp.~391--401, 1993.

\bibitem{ned16c}
A.~Nedi{\'c}, A.~Olshevsky, and C.~A. Uribe, ``A tutorial on distributed
  (non-bayesian) learning: Problem, algorithms and results,'' in {\em 55th IEEE
  Conference on Decision and Control (CDC)}, pp.~6795--6801, Dec 2016.

\bibitem{ned09}
A.~Nedi\'{c}, A.~Olshevsky, A.~Ozdaglar, and J.~N. Tsitsiklis, ``On distributed
  averaging algorithms and quantization effects,'' {\em IEEE Transactions on
  Automatic Control}, vol.~54, no.~11, pp.~2506--2517, 2009.

\bibitem{tsi84}
J.~N. Tsitsiklis and M.~Athans, ``Convergence and asymptotic agreement in
  distributed decision problems,'' {\em IEEE Transactions on Automatic
  Control}, vol.~29, no.~1, pp.~42--50, 1984.

\bibitem{touri2014product}
B.~Touri and A.~Nedi{\'c}, ``Product of random stochastic matrices,'' {\em IEEE
  Transactions on Automatic Control}, vol.~59, no.~2, pp.~437--448, 2014.

\bibitem{ned17e}
A.~Nedi{\'c}, A.~Olshevsky, and C.~A. Uribe, ``Distributed learning for
  cooperative inference,'' {\em arXiv preprint arXiv:1704.02718}, 2017.

\bibitem{ned14}
A.~Nedi{\'c}, A.~Olshevsky, and C.~A. Uribe, ``Nonasymptotic convergence rates
  for cooperative learning over time-varying directed graphs,'' in {\em
  Proceedings of the American Control Conference}, pp.~5884--5889, 2015.

\bibitem{ned15b}
A.~Nedi{\'c}, A.~Olshevsky, and C.~A. Uribe, ``Network independent rates in
  distributed learning,'' in {\em Proceedings of the American Control
  Conference}, pp.~1072--1077, 2016.

\bibitem{wang2016opinion}
C.~Wang, ``An opinion dynamics model with increasing self-confidence,'' {\em
  arXiv preprint arXiv:1609.05732}, 2016.

\bibitem{demarzo2003persuasion}
P.~M. DeMarzo, D.~Vayanos, and J.~Zwiebel, ``Persuasion bias, social influence,
  and unidimensional opinions,'' {\em The Quarterly journal of economics},
  vol.~118, no.~3, pp.~909--968, 2003.

\bibitem{lorenz2011convergence}
J.~Lorenz, ``Convergence to consensus in multiagent systems and the lengths of
  inter-communication intervals,'' {\em arXiv preprint arXiv:1101.2926}, 2011.

\bibitem{olshevsky2018fully}
A.~Olshevsky, I.~C. Paschalidis, and A.~Spiridonoff, ``Fully asynchronous
  push-sum with growing intercommunication intervals,'' {\em arXiv preprint
  arXiv:1802.08634}, 2018.

\bibitem{ned15}
A.~Nedi{\'c}, A.~Olshevsky, and C.~A. Uribe, ``{Fast convergence rates for
  distributed \mbox{non-Bayesian} learning},'' {\em IEEE Transactions on
  Automatic Control}, vol.~62, pp.~5538--5553, Nov 2017.

\bibitem{touri2012product}
B.~Touri, {\em Product of Random Stochastic Matrices and Distributed
  Averaging}.
\newblock Springer Science \& Business Media, 2012.

\bibitem{ned13}
A.~Nedi\'{c} and A.~Olshevsky, ``Distributed optimization over time-varying
  directed graphs,'' {\em IEEE Transactions on Automatic Control}, vol.~60,
  no.~3, pp.~601--615, 2015.

\bibitem{gharesifard2010does}
B.~Gharesifard and J.~Cort{\'e}s, ``When does a digraph admit a doubly
  stochastic adjacency matrix?,'' in {\em American Control Conference (ACC),
  2010}, pp.~2440--2445, IEEE, 2010.

\end{thebibliography}

\end{document}